\theoremstyle{plain}
\newtheorem{theorem}{Theorem}[section]
\newtheorem{definition}[theorem]{Definition}
\newtheorem{question}[theorem]{Question}
\newtheorem{proposition}[theorem]{Proposition}
\newtheorem{lemma}[theorem]{Lemma}
\newtheorem{remark}[theorem]{Remark}
\numberwithin{equation}{theorem}
\newcommand{\PP}{\mathbb{P}}
\newcommand{\EE}{\mathcal{E}}
\newcommand{\OO}{\mathcal{O}}
\newcommand{\LL}{\mathcal{L}}
\newcommand{\Ql}{\mathbb{Q}_l}
\newcommand{\et}{\mathrm{H}_{\text{\'et}}}
\newcommand{\XX}{\mathcal{X}}
\newcommand{\mA}{\mathcal{A}^{\vee}}
\newcommand{\cris}{\mathrm{H}_{\mathrm{cris}}}
\newcommand{\NS}{\mathrm{NS}}
\newcommand{\X}{\overline{X}}
\newcommand{\Z}{\mathbb{Z}}
\newcommand{\CC}{\mathbb{C}}
\DeclareMathOperator {\Spec}{\textnormal{Spec}}
\DeclareMathOperator {\Pic}{Pic}
\DeclareMathOperator {\Aut}{Aut}
\DeclareMathOperator {\Def}{Def}
\DeclareMathOperator {\Dim}{dim}
\DeclareMathOperator {\Hom}{Hom}
\DeclareMathOperator {\Sym}{Sym}
\DeclareMathOperator{\Char}{char}
\DeclareMathOperator{\Id}{Id}
\DeclareMathOperator{\Ho}{H}
\DeclareMathOperator{\DR}{DR}
\DeclareMathOperator{\Cov}{Cov}
\DeclareMathOperator{\Sets}{\textbf{Sets}}
\DeclareMathOperator{\Hilb}{Hilb}
\numberwithin{equation}{theorem}
\title []{Remarks on Automorphism and Cohomology of Finite Cyclic Coverings of Projective Spaces}
\author {renjie lyu}
\address{Korteweg-de Vries Instituut, Universiteit van Amsterdam,
Science Park 105-107, 1098 XG, Amsterdam, the Netherlands}
\email{R.Lyu@uva.nl}
\author {xuanyu pan}
\address{Institute of Mathematics, AMSS, Chinese Academy of Sciences,
55 ZhongGuanCun East Road, Beijing, 100190, China}
\email{pan@amss.ac.cn}
\email{panxuanyu@mpim-bonn.mpg.de}
\date{\today}
\begin{document}
\begin{abstract}
For a smooth finite cyclic covering over a projective space of dimension great than one, we show that the group of automorphisms acts faithfully on the cohomology except for a few cases. In characteristic zero, we study the equivariant deformation theory and the automorphism group for complex cyclic coverings. The proof uses the decomposition of the sheaf of differential forms due to Esnault and Viehweg, In positive characteristics, a lifting criterion of automorphisms reduces the problem to characteristic zero. To apply this criterion, we prove the degeneration of the Hodge-de Rham spectral sequences and the infinitesimal Torelli theorem for finite cyclic coverings of projective spaces defined over an arbitrary field.
 
\end{abstract}

\maketitle
\tableofcontents

\section{Introduction}
The Torelli theorem says that: An isomorphism $\varphi:\Ho^n(X)\simeq \Ho^n(X')$ between the cohomology groups of two smooth projective varieties $X$ and $X'$ with dimension $n$, which preserves some algebraic structures (e.g. Hodge structures) is induced by an isomorphism $\psi : X\simeq X'$ between the varieties. 

It is natural to ask whether the map $\psi$ that induces $\varphi$ is (up to a sign) unique, i.e.
\begin{question}\label{question}
 Does the automorphism group $\mathrm{Aut}(X)$ act on the cohomology group $\Ho^n(X)$ faithfully for a smooth projective variety $X$?
\end{question}
Recently, this question for varieties of complete intersections is independently studied and sloved in \cite{JA2} and \cite{Pan15}. Javanpeykar and Loughran use this faithfulness result to induce the Shafarevich conjecture for complete intersections by the Lang-Vojta conjecture, see \cite{JA1}. In \cite{JA2}, this result is used to show that the stack of smooth hypersurfaces with a level $N$ structure is uniformisable by a smooth affine scheme. The second author solves this question for smooth complex cubic fourfolds in \cite{Pan} and uses it to relate the symmetry of the defining equation of a cubic fourfold to its middle Picard number.

Historically the answer to this fundamental question for curves is now known as the Torelli theorem of curves of genus at least two. For $K$3 surfaces over different base fields, the question has been settled by Burns, Rapoport, Shafarevich and Ogus, etc. We refer to \cite[Chapter 15]{K3} for a complete survey. We know very few about Question \ref{question} for other families of varieties of higher dimensions. The aim of this paper is to study this automorphism representation for finite cyclic coverings of projective spaces. We refer to Section $1$ as preliminaries of finite cyclic coverings. Our first main theorem is 
\begin{theorem}\label{M1}
Let $\CC$ be the complex numbers, and let $f: X\rightarrow \PP^n_{\CC}$ be a smooth finite cyclic covering of the projective space with $n\geq 2$. Suppose that $X$ is not a quadric hypersurface. Then any non-trivial automorphim of $X$ non-trivially act on the cohomology group $H^n(X, \CC)$.
\end{theorem}
In Section $5$, the main result is generalized to finite cyclic coverings over a field of positive characteristic :
\begin{theorem} \label{M2}
Let $K$ be an algebraically closed field of positive characteristic, and let X be a smooth finite cyclic covering of $\PP^n_{K}$ with $n\geq 2$. If $X$ is not a quadric hypersurface, then the action of the automorphism group $\Aut(X)$ on $\et^n(X,\Ql)$ ($l\neq char(K)$) is faithful.
\end{theorem} 
The main idea of the proof involves the following lifting problem. Let $\pi: \mathcal{X}\rightarrow S$ be certain family of algebraic varieties over an irreducible base $S$, and let $X$ be a special fiber of the family. Assume that $\sigma\in \Aut(X)$ is a finite order automorphism of the special fiber $X$. A general question is whether one can lift $\sigma$ to be an automorphism of the family $\pi$, i.e. an automorphism $\tilde{\sigma}$ of $\mathcal{X}$ such that $\pi\circ \tilde{\sigma}=\pi$ and $\tilde{\sigma}|_{X}=\sigma$. If the answer to this question is affirmative, by specialization argument, one can reduce the faithfulness problem (\ref{question}) of a special fiber to a very general one. 

Let $X$ be a smooth projective variety and $G$ be a finite subgroup of automorphisms. The equivariant deformation theory of the pair $(X, G)$ is reviewed in Section $2$, where we apply Bertin and M\'ezard's result \cite{BM00} to compute the first-order deformation and obstructions of the equivaraint deformation functor $\Def_X^G$. 

The automorphism subgroup $G$ we concern in Question \ref{question} is a finite subgroup of the kernel of the automorphism representation. Hence we set $X$ to be a smooth complex cyclic covering of a projective space $X$ and $G$ to be a finite group acting trivially on the Betti cohomology group $H^*(X,\CC)$. Under the hypotheses, we apply Hodge theory and Kodaira-Spencer map to study when the equivaraint deformation functor $\Def_X^G$ is unobstructed, see Theorem \ref{thmunobs}. The deformation theory and  infinitesimal Torelli theorem of complex cyclic coverings, established by J. Wehler in \cite{Weh86}, is the key to our proof. 

Section $3$ is devoted to the automorphism representation of a very general cyclic covering. In fact, Theorem \ref{finiteauto} show that any automorphism of a very general complex cyclic covering of projective space of dimension at least $3$ is a covering transformation except quadrics. Finally by the decomposition of the sheaf of differential forms on cyclic coverings (\ref{decom-diff})(cf. \cite{Kon85}, \cite{Weh86} and \cite{EV92}), it is easy to prove any non-trivial covering transformation acts non-trivially on the cohomology $H^n(X, \CC)$, see Lemma \ref{Covfaith}. However, the case of 2-dimensional cyclic coverings will be treated separately since the conclusions of the above arguments fail to validate that case.

For a cyclic covering of projective space $X$ defined over a field $K$ of positive characteristic, we consider the induced cyclic covering defined over the Witt ring $W(K)$. Then the geometric generic fiber is a complex cyclic covering of projective space. The main point here is a lifting criterion of automorphisms of varieties defined over a field of positive characteristic, addressed by the second author in \cite[Theorem 1.7]{Pan16}(cf. Theorem \ref{corolifting}). If this criterion holds for finite cyclic coverings, we might reduce Theorem \ref{M2} to Theorem \ref{M1}.  In order to apply this criterion, on the one hand, we mimic Deligne's method in \cite[Exp XI]{SGA7-II} to show that the relative Hodge-de Rham spectral sequences 
\[
E^{j,i}_1:=R^i\pi_*\Omega^j_{\mathcal{X}/S}\Longrightarrow \mathbb{R}^{i+j}\pi_*(\Omega_{\mathcal{X}/S}^\bullet)
\]
degenerates at $E_1$-page for any family of smooth cyclic coverings of projective spaces $\pi: \mathcal{X}\rightarrow S$.
%begin{theorem}\label{M3}
%Let $\bar{X}$ be a smooth projective scheme over the Witt ring $W:=W(K)$ of $K$, and let $X$ be the special fiber over $\Spec(K)$. Assume that the Hodge-de Rham spectral sequences of $\bar{X}/W$ degenerates at $E_1$ and the terms are locally free. Let $g_0$ be an automorphism of $X$ such that the map \[\cris^i(g_0): \cris^n(X/W)\rightarrow \cris^n(X/W)\] preserves the Hodge filtrations under the natural identification $$\cris^n(X/W)\cong \Ho^n_{\DR}(\bar{X}/W).$$ If the cup product\[\Ho^1(X,T_{X})\rightarrow
%\bigoplus\limits_{p+q=n} \mathrm{Hom}(\Ho^q(X,\Omega^p_{X/K}),\Ho^{q+1}(X,\Omega^{p-1}_{X/K}))\]is injective, then one can lift $g_0$ to an automorphism $g: \bar{X}\rightarrow \bar{X}$ of $\bar{X}$ over $W$.
%\end{theorem}
On the other hand, we need to verify the infinitesimal Torelli theorem for $X/K$, namely the canonical cup-product map
\[
\lambda_p: H^1(X, \Theta_{X/K})\rightarrow \Hom(H^{n-p}(X, \Omega_{X/K}^p), H^{n+1-p}(X,\Omega_{X/K}^{p-1}))
\]
is injective for some $p$, where $\Theta_{X/K}$ is the tangent sheaf  of $X$. As mentioned above, J. Wehler proved the infinitesimal Torelli theorem for complex cyclic coverings by using Flenner's criterion, see \cite{Fle86} and \cite{Weh86}. In this article, we use an enhanced Flenner's criterion, due to D.Zhang \cite{Pan15}, to show that Wehler's proof also works for cyclic coverings over an arbitrary field. The above two discussions are settled in Section $4$.

\textbf{Acknowledgments.} The authors are very grateful to Prof.~K.~Zuo for his interests in this paper. The authors also appreciate Prof.~M.~Kerr for his support of the algebraic geometry and Hodge theory seminar in Washington University in St.Louis. The first author is very grateful to his advisor Prof.~M.~Shen for some dicussions. The authors also thank their friend Dr.~D.~Zhang for answering many questions. Some parts of this paper were written in Max Planck Institute for Mathematics. The second author is very grateful to the institute for providing the comfortable environments.
\section{Finite Cyclic Coverings}\label{sectionfcc}

In this section, we review some basic facts of the cyclic coverings of smooth projective varieties.

\begin{definition}\label{def2.1}
Let $Z$ be a smooth projective variety over an algebraically closed field $K$, and let $\LL$ be an invertible sheaf on $Z$. Assume that $k$ is an integer and $\LL^k$ has a nontrivial section $s\in H^0(Z, \LL^k)$. There is a natural $\mathcal{O}_Z$-algebra
\[\mathcal{A}:=\bigoplus\limits_{i=0}^{k-1} \mathcal{L}^{-i},\]
where the multiplication structure is given by the section $s^{\vee}: \mathcal{L}^{-k}\longrightarrow \mathcal{O}_Z.$
We define the affine morphism associated to the invertible sheaf $\LL$\begin{equation}
f: X: =\underline{\Spec}(\mathcal{A})\rightarrow Z
\end{equation} to be the k-fold cyclic covering of $Z$ branched along the zero divisor $D:=Z(s)$.  

\end{definition}

We denote by $\mathbb{V}(L) :=\underline{\Spec}(\Sym^{\bullet}\LL^{\vee})$ the total space of the invertible sheaf $\mathcal{L}$, and let $\pi_L: \mathbb{V}(L)\rightarrow Z$ be the natural projection. If $t\in \Gamma(\mathbb{V}(L), \pi^*_L\LL)$ is the tautological section, then the $k$-fold cyclic covering $X$ is defined by the equation \[t^k-\pi_L^* s\] in $\mathbb{V}(L)$.
In particular, let $\{U_\alpha\}$ be an affine open cover of $Z$ such that $\mathcal{L}|_{U_\alpha}\simeq \OO_{U_\alpha}$. Assume that $D$ is defined by the equation $\Phi_\alpha(\underline{z})=0$ on $U_\alpha$. Then $X$ is locally defined by the equation 
\begin{equation} \label{eqforvl}
\omega_{\alpha}^k-\Phi_\alpha(\underline{z})=0,
\end{equation} where $(\underline{z}, \omega_\alpha)$ are the local coordinates on $\mathbb{V}(L)|_{U_\alpha}=U_{\alpha}\times \mathbb{A}^1$. If $k$ is not divided by $\Char(K)$, then it follows from the equation (\ref{eqforvl}) that $X$ is smooth if $D$ is smooth.

Let $\EE$ be the locally free sheaf $\OO_Z\oplus \LL^{-1}$. Denote by $\hat{L}$ be the projective bundle $\PP(\EE)$ over $Z$. The cyclic covering $X$ is a divisor in $\hat{L}$ through natural inclusions
\begin{equation}\label{linebundleL}
\xymatrix{
X \ar@{^{(}->}[r] \ar[rd]_f  &\mathbb{V}(L) \ar[d]_{\pi_L} \ar@{^(->}[r] & \hat{L} \ar[dl]^{\pi}\\
&Z &.
}
\end{equation}

\begin{lemma}\label{lemma2.0.1}  \label{prop2.2}
We use the notations as above. Let $\sigma$ be the section of the projection $\pi: \hat{L}\rightarrow Z$ induced by the canonical map $\EE\rightarrow \OO_Z$. Denote by $C: =\sigma(Z)$ the image of the section $\sigma$. Let $R:=f^{-1}(D)_{red}$ be closed subscheme with reduced structure on the closed subset $f^{-1}(D)$ of $X$ of We have that:
\begin{enumerate}[(i)]
%\item the invertible sheaf $\OO_{\hat{L}}(C)$ associated to the divisor $C$ on $\hat{L}$ is isomorphic to $\pi^*\mathcal{L}\otimes \OO_{\PP(\EE)}(1)$;
\item the line bundle $\OO_{\hat{L}}(X)$ associated to the divisor $X$ on $\hat{L}$ is isomorphic to $\pi^*\mathcal{L}^k\otimes\OO_{\PP(\EE)}(k)$;
\item $\OO_{X}(R)= \OO_{\hat{L}}(C)|_X=f^*\LL;$
\item the normal sheaf $N_{X/\hat{L}}$ of $X$ in $\hat{L}$ is isomorphic to $f^*\mathcal{L}^{k}$;
\item the canonical sheaf $ \kappa_X$ of $X$ is isomorphic to $f^*(\kappa_Z\otimes\mathcal{L}^{k-1})$.
\end{enumerate}

\end{lemma}

\begin{proof} \label{2.0.5}
The line bundle $\OO_{\hat{L}}(X)$ can be written as 
\begin{equation}\label{eqXdiv}
\OO_{\PP(\EE)}(d)\otimes \pi^*\mathscr{M},
\end{equation} 
where $\mathscr{M}\in \Pic(Z)$. Suppose that $\xi$ is a fiber of the projection $\pi$. It follows from the defining equation (\ref{eqforvl}) of $X$ that the intersection number $[X]\cdot [\xi]$ is equal to $k$. Hence $d=k$. Notice that $\OO_{\PP(\EE)}(1)|_C$ is trivial since the section $C$ corresponds to the surjection $\EE\rightarrow \OO_Z$. Thus we have
\[
\sigma^*(\OO_{\hat{L}}(X)|_C)=\mathscr{M}.
\]
Let us denote $\tau\in \Gamma(\hat{L}, \pi^*\LL)$ the tautological section. The divisor $X$ is the zero set of the section $\tau^k-\pi^* s$. In fact, the image $C$ is the zero locus of $\tau$. Therefore, the defining equation $\tau^k-\pi^* s$ of $X$ restricts to $-\pi^*s$ on $C$. And the line bundle $\mathscr{M}$ is the line bundle associated to the section $s$, i.e. $\mathscr{M}=\LL^k$. It concludes the first assertion.

%Recall that $\mathbb{V}(L)$ is the line bundle associated to $\LL$ with local coordinates $(w_{\alpha}, \underline{z}).$ The image $C$ is locally defined by the equation $\omega_{\alpha}=0$. We claim that $X$ and $C$ intersect transversely in $\hat{L}$, and \[\pi_*([X] \cdot [C])=\pi_*([X \cap C])=[D]=c_1(\mathscr(M)).\] in $\Pic(Z)$. Indeed, if $p$ is a point of $X\cap C$ such that
%\[ p=(0, \underline{z}) ~ \text{and}~\Phi_{\alpha}(\underline{z})=0.
%\] It is easy to see that the vector $\frac{\partial}{\partial w_\alpha}$ lies in the tangent space $T_pX$ of $X$ at $p$. Then the transversality condition \[T_pX+T_pC=T_p{\mathbb{V}(L)}.\] is satisfied. 
For the second assertion, the scheme-theoretic intersection $X\cdot C$
is $^k\sqrt{f^{-1}(D)}=R$ by studying the local defining equations of $X$ and $C$. Therefore $\OO_X(R)=f^*\LL$.

It follows from the first assertion that 
\[
N_{X/\hat{L}}=\OO_{\hat{L}}(X)|_{X}=f^*\mathcal{L}^k\otimes \OO_{\PP(\EE)}(k)|_{X}.
\] 
Note that the line bundle of the divisor $C$ on $\hat{L}$ is isomorphic to $\pi^*\mathcal{L}\otimes \OO_{\PP(\EE)}(1)$(cf. \cite[Proposition 2.6, \S V]{Har77}) and $\OO_X(C)=f^*\LL$. We obtain $\OO_{\PP(\EE)}(1)|_X=\OO_X$. Therefore, the normal bundle $N_{X/\hat{L}}$ is isomorphic to 
$f^*\mathcal{L}^k$.

The last assertion is due to the standard Hurwitz formula and notice that the ramification divisor of the map $f$ is $(k-1)\cdot R$.

\end{proof}

%\begin{proposition}
%With the notations as above, we have that:
%\begin{enumerate}[(i)]
%\item the normal sheaf $N_{X/\hat{L}}$ of $X$ in $\hat{L}$ is isomorphic to $f^*\mathcal{L}^{k}$;
%\item the canonical sheaf $ \kappa_X$ of $X$ is isomorphic to $f^*(\kappa_Z\otimes\mathcal{L}^{k-1})$, where $\kappa_Z$ is the canonical sheaf of $Z$.
%\end{enumerate}

%\end{proposition}

%\begin{proof}
%\begin{enumerate}[(i)]
%\item It follows from Lemma \ref{lemma2.0.1} that $N_{X/\hat{L}}=\OO_{\hat{L}}(-X)|_{X}=f^*\mathcal{L}^k.$
%\item Consider the Euler sequence of sheaves
%\begin{equation}
%0\rightarrow \Omega^1_{\hat{L}/Z}\rightarrow \pi^*\EE\otimes\OO_{\PP(\EE)}(-1)\rightarrow \OO_{\PP(\EE)}\rightarrow 0.
%\end{equation}
%We have  $\Omega^1_{\hat{L}/Z}|_{X}=\wedge^2(\pi^*\EE\otimes \OO_{\PP(\EE)}(-1))|_X=f^*\mathcal{L}^{-1}$ by Lemma \ref{lemma2.0.1} again. Suppose that $\kappa_{\hat{L}}$ is the canonical bundle of $\hat{L}$, it follows that \[{\kappa_{\hat{L}}}|_{X}=(\pi^*\kappa_Z\otimes\Omega^1_{\hat{L}/Z})|_X= f^*\kappa_Z\otimes f^*\mathcal{L}^{-1}.\] By the adjunction formula $\kappa_X={\kappa_{\hat{L}}}|_X\otimes N_{X/\hat{L}}$, we obtain \[\kappa_X = f^*(\kappa_Z\otimes\mathcal{L}^{k-1}).\]
%\end{enumerate}
%\end{proof}

\section{Deformation Theory and Infinitesimal Torelli Theorem}

In this section, we show that the deformations of some automorphisms of a cyclic coverings are unobstructed, cf. Theorem \ref{thmunobs}. We start this section by recalling the theory of equivariant deformations.

Let $X$ be a smooth and proper scheme over a field $k$. Assume that $G$ is a finite group embedded into the automorphism group $\Aut_k(X)$, i.e. \[\iota : G\hookrightarrow \Aut_k(X).\] Denote by $\mathscr{C}_k$ the category of Artinian local k-algebras with residue field $k$. An infinitesimal deformation of $(X, \iota)$ over an Artinian local k-algebra $A$ is a triple $(\mathcal{X}, \tilde{\iota}, \psi)$ consisting of a scheme $\mathcal{X}$ flat and proper over $A$, together with an injective group homomorphism 
\[
\tilde{{\iota}}: G\hookrightarrow \Aut_A(\mathcal{X})
\] 
and a $G$-equivariant isomorphism 
\[
\psi: \mathcal{X}\times_{\Spec(A)} \Spec(k) \rightarrow X.
\]  
Two infinitesimal deformations $(\mathcal{X}, \tilde{\iota}, \psi)$ and $(\mathcal{X}', \tilde{\iota}', \psi')$ are called isomorphic if there exists a $G$-equivariant isomorphism 
\[
\Phi: \mathcal{X}\rightarrow \mathcal{X}'
\]  
over $\Spec(A)$ that induces the identity on the closed fiber $X$.

\begin{definition}
With the notations as above, the equivairant deformation functor \[\Def_X^{G}: \mathscr{C}_k\rightarrow \Sets\] assigns each $A\in \mathscr{C}_k$ to the set $\Def^G_X(A)$ of isomorphism classes of infinitesimal deformations of $(X, \iota)$ over $A$.
\end{definition}

\begin{definition}
Suppose that $F$ and $H$ are covariant functors from $\mathscr{C}_k$ to $\textbf{Sets}$. A morphism $F\rightarrow H$ is called smooth if for any surjection $B\rightarrow A$ in $\mathscr{C}_k$, the map 
\[
F(B)\rightarrow F(A)\times_{H(A)} H(B)
\] 
is surjective. 
\end{definition} 
\begin{remark}
A single functor $F$ is called smooth if the canonical morphism $F\rightarrow G$, where $G=\{\text{one element}\}$, is smooth.
\end{remark}

The covariant functors from $\mathscr{C}_k$ to $\Sets$ are called \textit{functors of Artin rings} in \cite{Sch68}. We refer to the following proposition which is used to prove the smoothness of morphisms of \textit{functors of Artin rings}.

\begin{proposition}\label{3.1}\cite[Proposition 2.3.6]{Ser06}
Let $\mathscr{C}_k$ be the category of Artinian local k-algebras. Suppose that $F$ (resp. $H$) is the functor of Artin rings having a semiuniversal formal element and an obstruction space $obs(F)$ (resp. $obs(H)$). Let $k[\epsilon]\in \mathscr{C}_k$ be the dual number and $t_F:=F(k[\epsilon])$ be the space of first-order deformations. If a morphism $h: F\longrightarrow H$ satisfies the following two conditions:
\begin{enumerate}
\item the tangent map $dh: t_F\rightarrow t_H$ is surjective;
\item the obstruction map $\delta: obs(F)\rightarrow obs(H)$ is injective,
\end{enumerate}
then $h$ is smooth.
\end{proposition}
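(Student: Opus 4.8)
The plan is to reduce to the case of a small extension and then run the standard obstruction-theoretic argument: hypothesis (2) will let us lift a class along $F$, and hypothesis (1) will let us correct that lift so that it maps to the prescribed element of $H$. First recall that every surjection $B\twoheadrightarrow A$ in $\mathscr{C}_k$ factors as a finite composition of small extensions $0\to (t)\to B'\to A'\to 0$ with $(t)$ a one-dimensional ideal annihilated by the maximal ideal of $B'$, and that the class of smooth morphisms of functors of Artin rings is closed under such compositions; hence it suffices to verify that $F(B)\to F(A)\times_{H(A)}H(B)$ is surjective for a single small extension $0\to (t)\to B\to A\to 0$. Fix $(\xi,\eta)$ in the fiber product, so $\xi\in F(A)$, $\eta\in H(B)$, and $h(\xi)$ equals the image of $\eta$ in $H(A)$.

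Since $F$ and $H$ possess semiuniversal formal elements they satisfy Schlessinger's conditions (H1)--(H3); in particular $t_F$ and $t_H$ are finite-dimensional $k$-vector spaces, and whenever a class over $A$ admits a lift over $B$ the set of such lifts is a torsor under $t_F\otimes_k(t)$, respectively $t_H\otimes_k(t)$, these torsor structures being functorial. Now use the obstruction theory: the obstruction to lifting $\xi$ to $F(B)$ is a class $o_F(\xi)\in obs(F)\otimes(t)$, and by compatibility of the obstruction assignments with $h$ its image under $\delta\otimes\mathrm{id}$ is the obstruction $o_H(h(\xi))\in obs(H)\otimes(t)$ to lifting $h(\xi)$ to $H(B)$. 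But $h(\xi)$ does lift, namely to $\eta$, so $o_H(h(\xi))=0$, and since $\delta$ is injective we conclude $o_F(\xi)=0$. Therefore there is a lift $\xi'\in F(B)$ of $\xi$. The class $h(\xi')\in H(B)$ restricts to $h(\xi)\in H(A)$, the same image as $\eta$, so $\eta$ and $h(\xi')$ differ by a unique element $v\in t_H\otimes_k(t)$ under the torsor structure on lifts of $h(\xi)$. By hypothesis (1), $dh:t_F\to t_H$ is surjective, so choose $u\in t_F\otimes_k(t)$ with $(dh\otimes\mathrm{id})(u)=v$ and replace $\xi'$ by the translate $u\cdot\xi'\in F(B)$, which is again a lift of $\xi$. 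Compatibility of $h$ with the torsor actions then yields $h(u\cdot\xi')=(dh\otimes\mathrm{id})(u)\cdot h(\xi')=v\cdot h(\xi')=\eta$, so $u\cdot\xi'$ maps to $(\xi,\eta)$, and smoothness follows.

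The one point requiring genuine care — and where the existence of semiuniversal formal elements is doing real work — is the two functoriality statements invoked above: that the obstruction map $\delta$ intertwines the obstruction assignments of $F$ and $H$ along $h$, and that $dh$ intertwines the two torsor structures on sets of lifts. Both are part of the formalism of obstruction theories attached to functors of Artin rings with good deformation theory; granting them (see \cite{Sch68} and \cite[\S 2.3, \S 2.6]{Ser06}), the argument above is purely formal.
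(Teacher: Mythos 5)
The paper does not prove this proposition; it is quoted verbatim from Sernesi, and your argument is precisely the standard proof given there (reduction to small extensions, vanishing of the obstruction via injectivity of $\delta$, then correction of the lift via surjectivity of $dh$ and the transitive action of $t_H\otimes(t)$ on lifts). One minor imprecision: for functors that merely admit a semiuniversal element the set of lifts along a small extension is a homogeneous space under $t_F\otimes(t)$ (transitive but not necessarily free action), so the element $v$ with $v\cdot h(\xi')=\eta$ need not be unique; since your argument only uses existence of $v$ and equivariance of the action under $h$, this does not affect correctness.
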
 

In order to describe the tangent space and obstruction space of the equivariant deformation functor, we briefly recall Grothendieck's equivariant cohomology theory \cite{Gro1957}: Consider the covering map $\pi: X\rightarrow Y:=X/G$. There are two left exact functor defined on the category of $(G, \OO_X)$-modules, namely $\pi_*^G$ and $\Gamma^G(X, \bullet)$. For any sheaf of $G$-module $\mathscr{F}$ on $X$, $\pi_*^G(\mathscr{F})$ is the sheaf of $\OO_Y$-modules
\[
V\mapsto \Gamma(\pi^{-1}V, \mathscr{F})^G, \text{~where~} V \text{~is open in~} Y,
\]
and $\Gamma^G(X, \mathscr{F})$ is the group $\Gamma(X, \mathscr{F})^G$. We denote by $R^q\pi_*^G(\mathscr{F})$(resp. $H^q(X, G, \mathscr{F})$) the right derived functors of $\pi_*^G$(resp. $\Gamma^G(X, \bullet)$). There exist two Leray spectral sequences convergent to $H^{\bullet}(X, G, \mathscr{F})$
\begin{align*}
^I E^{p,q}_2&=H^p(Y, R^q\pi_*^G(\mathscr{F}))\Rightarrow H^{p+q}(X, G, \mathscr{F}),\\
^{II} E^{p,q}_2&=H^p(G, H^q(X, \mathscr{F}))\Rightarrow H^{p+q}(X, G, \mathscr{F}).
\end{align*}

Suppose that $X$ is a non-singular complete curve over $k$ with a finite group $G$ acts on it. Let $\Theta_X$ be the sheaf of tangent bundle of $X$. Bertin and M\'ezard showed in \cite{BM00} that the tangent space(resp. obstruction space) of the equivariant deformation functor $\Def_X^{G}$ is $H^1(X, G, \Theta_X)$(resp. $H^2(X, G, \Theta_X)$). The result naturally generalize to smooth proper $k$-schemes of higher dimensions.

Assume that the characteristic of the field $k$ does not divide the order of the group $G$. Then for any coherent $G$-sheaf $\mathscr{F}$, the cohomology group $H^q(X, \mathscr{F})$ is a $k$-vector space and the group cohomology $H^*(G, H^q(X, \mathscr{F}))$ vanishes if $*\neq 0$. Therefore, in this situation, the second Leray spectral sequence degenerates, and we concludes that $H^1(X, \Theta_X)^G$(resp. $H^2(X, \Theta_X)^G$) is the tangent space(resp. obstruction space) of the equivariant deformation functor $\Def_X^G$.

In the rest of the section, we may assume that the base field is $\CC$. Consider the natural forgetful map
\begin{equation}\label{forgeth}
h: \Def_X^G \rightarrow \Def_X
\end{equation}
assigning an infinitesimal deformation $(\mathcal{X},  \tilde{\iota}, \psi)$ over $A$ to the underlying infinitesimal deformation $\mathcal{X}$ over $A$.
Then the associated tangent map 
\[ 
dh: \Def^G_X(k[\epsilon]) =H^1(X, \Theta_X)^G\rightarrow H^1(X, \Theta_X)=\Def_X(k[\epsilon])\] and the obstruction map
\[
\delta: obs(\Def^G_X)=H^2(X, \Theta_X)^G\rightarrow H^2(X, \Theta_X)=obs(\Def_X)
\]
are both natural inclusions. 
\begin{proposition} \label{prophsm}
Use the same notations as above. Suppose that $n$ is the dimension of $X$ and the cup product 
\begin{equation}\label{3.4.1}
\lambda_p: H^1(X, \Theta_X)\rightarrow \Hom(H^{n-p}(X, \Omega_X^p), H^{n-p+1}(X, \Omega_X^{p-1}))
\end{equation} is injective for some $p$. If the group $G$ acts trivially on $H^n(X, \mathbb{C})$, then the forgetful functor $h$ in (\ref{forgeth}) is smooth.
\end{proposition}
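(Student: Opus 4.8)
The plan is to verify the two hypotheses of Proposition~\ref{3.1} for the forgetful morphism $h\colon \Def_X^G\to \Def_X$. As recalled above, both functors admit semiuniversal formal elements and obstruction spaces, the tangent map $dh$ is the natural inclusion $H^1(X,\Theta_X)^G\hookrightarrow H^1(X,\Theta_X)$, and the obstruction map $\delta$ is the natural inclusion $H^2(X,\Theta_X)^G\hookrightarrow H^2(X,\Theta_X)$. Thus condition~(2) of Proposition~\ref{3.1}, the injectivity of $\delta$, holds automatically, and the whole problem reduces to condition~(1): the surjectivity of $dh$. Since $dh$ is an inclusion, this is exactly the assertion that $G$ acts trivially on $H^1(X,\Theta_X)$, i.e.\ $H^1(X,\Theta_X)^G=H^1(X,\Theta_X)$.

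To prove that, I would first note that the map $\lambda_p$ of~(\ref{3.4.1}) is $G$-equivariant: it is the cup product
\[
H^1(X,\Theta_X)\otimes H^{n-p}(X,\Omega_X^p)\longrightarrow H^{n-p+1}(X,\Theta_X\otimes\Omega_X^p)
\]
followed by the morphism on cohomology induced by the interior-product morphism of sheaves $\Theta_X\otimes\Omega_X^p\to\Omega_X^{p-1}$, and both of these operations are canonical, hence compatible with the actions of $\Aut(X)$ obtained by functoriality. Next, the Hodge decomposition
\[
H^n(X,\mathbb{C})=\bigoplus_{a+b=n}H^b(X,\Omega_X^a)
\]
is again functorial in the smooth proper complex variety $X$, hence $G$-equivariant; so the hypothesis that $G$ acts trivially on $H^n(X,\mathbb{C})$ forces $G$ to act trivially on each Hodge summand. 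In particular $G$ acts trivially on $H^{n-p}(X,\Omega_X^p)$ and on $H^{n-p+1}(X,\Omega_X^{p-1})$, and therefore on $\Hom\bigl(H^{n-p}(X,\Omega_X^p),\,H^{n-p+1}(X,\Omega_X^{p-1})\bigr)$.

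Finally I would combine the two observations: for the value of $p$ at which $\lambda_p$ is injective, $\lambda_p$ embeds $H^1(X,\Theta_X)$, $G$-equivariantly, into a vector space carrying the trivial $G$-action. Hence $G$ acts trivially on $H^1(X,\Theta_X)$, so $dh$ is in fact an isomorphism, a fortiori surjective, and Proposition~\ref{3.1} yields the smoothness of $h$. The argument is short; the only step needing a little care is the $G$-equivariance of the maps in play, but this is purely formal since the cup product, the interior product of a vector field with a form, and the Hodge decomposition are all natural with respect to automorphisms of $X$ — so I anticipate no real obstacle. (One does invoke classical Hodge theory, hence the standing assumption that $X$ is a smooth proper complex variety; in the intended application $X$ is a cyclic covering, so this is automatic.)
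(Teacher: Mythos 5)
Your proof is correct and follows essentially the same route as the paper: the paper also reduces to condition (1) of Proposition~\ref{3.1} (noting condition (2) is automatic since $\delta$ is an inclusion) and then establishes $H^1(X,\Theta_X)^G=H^1(X,\Theta_X)$ exactly as you do, via the $G$-equivariance of $\lambda_p$ and the triviality of the $G$-action on the Hodge summands of $H^n(X,\mathbb{C})$ (this is the paper's Lemma~\ref{lemma3.4}). Your added remarks on why the cup product and Hodge decomposition are $G$-equivariant only make the argument more explicit.
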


\begin{proof}
We set $F=\Def_X^G$ and $H=\Def_X$ in Proposition \ref{3.1}. Then the condition (2) of Proposition \ref{3.1} is automatically satisfied. In order to verify the condition (1), we consider the diagram 
\begin{equation}\label{eqr-inf-tor}
\xymatrix{
H^1(X, \Theta_X)^G\ar[r] \ar[d] &\Hom(H^{n-p}(X, \Omega_X^p), H^{n-p+1}(X, \Omega_X^{p-1}))^G \ar@{=}[d]\\
H^1(X, \Theta_X)\ar[r] &\Hom(H^{n-p}(X, \Omega_X^p),H^{n-p+1}(X, \Omega_X^{p-1})).
}
\end{equation}
The right vertical identity follows from the assumption that the action of $G$ on $H^n(X, \mathbb{C})$ is trivial. Therefore, the injectivity of $\lambda_p$ implies that \[H^1(X, \Theta_X)^G=H^1(X, \Theta_X).\] Therefore, the forgetful functor $h$ is smooth.
\end{proof}

%\begin{lemma}\label{lemma3.4}
%Let X be a smooth and proper scheme over $\CC$ of dimension n , and let G be a finite group of automorphisms. Assume that the group G acts trivially on $H^n(X, \mathbb{C})$, and for some integer $p$, the cup product map $\lambda_p$ (\ref{3.4.1}) is injective. Then the cohomology $H^1(X, \Theta_X)$ is $G$-invariant. 
%\end{lemma}

%\begin{proof}

%Note that the map $\lambda_p$ is $G$-equivariant. It gives rise to the following diagram
%\begin{equation}
%\xymatrix{
%H^1(X, \Theta_X)^G\ar[r] \ar[d] &\Hom(H^{n-p}(X, \Omega_X^p), H^{n-p+1}(X, \Omega_X^{p-1}))^G \ar@{=}[d]\\
%H^1(X, \Theta_X)\ar[r] &\Hom(H^{n-p}(X, \Omega_X^p),H^{n-p+1}(X, \Omega_X^{p-1})).
%}
%\end{equation}
%The right vertical identity follows from the assumption that the action of $G$ on $H^n(X, \mathbb{C})$ is trivial. Therefore, the injectivity of $\lambda_p$ implies that \[H^1(X, \Theta_X)^G=H^1(X, \Theta_X).\]
%\end{proof}

The  infinitesimal Torelli theorem of cyclic coverings over projective spaces is explored by J. Wehler.
\begin{theorem}(\cite[Theorem 4.8]{Weh86})\label{thm3.5}
Let $X$ be a smooth cyclic covering of $\PP^n_{\CC}$ of dimension $n\geq 2$. Then the cup product \[\lambda_p: H^1(X, \Theta_X)\rightarrow \Hom(H^{n-p}(X, \Omega_X^p), H^{n-p+1}(X, \Omega_X^{p-1}))\] is injective for some $p$ with the exceptions
\begin{itemize}
\item $X$ is a 3-fold covering of $\PP^2$ branched along a cubic curve;
\item $X$ is a 2-fold covering of $\PP^2$ branched along a quartic curve.
\end{itemize} 
\end{theorem}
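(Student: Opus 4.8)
The plan is to convert the injectivity of $\lambda_{p}$ into an injectivity statement for multiplication in the Jacobian ring of the branch divisor, and then invoke Macaulay duality. Write $Z=\PP^{n}$, let $\mathcal{L}=\mathcal{O}_{\PP^{n}}(m)$ be the line bundle carried by the covering, and let $F\in\CC[x_{0},\dots,x_{n}]$ be a homogeneous form of degree $N:=km$ cutting out the smooth branch divisor $D$. The deck group $\mu_{k}$ acts on $X$ over $\PP^{n}$, hence on each of $H^{1}(X,\Theta_{X})$, $H^{n-p}(X,\Omega_{X}^{p})$ and $H^{n-p+1}(X,\Omega_{X}^{p-1})$, and $\lambda_{p}$ is $\mu_{k}$-equivariant; so it suffices to prove that $\lambda_{p}$ is injective on each isotypic summand $H^{1}(X,\Theta_{X})_{(c)}$.

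First I would make the eigenspaces explicit. Using the Esnault--Viehweg splitting $f_{*}\Omega_{X}^{\bullet}(\log f^{-1}(D))\cong\bigoplus_{i=0}^{k-1}\Omega_{\PP^{n}}^{\bullet}(\log D)(-im)$, the Poincar\'e residue sequences $0\to\Omega_{\PP^{n}}^{j}(-im)\to\Omega_{\PP^{n}}^{j}(\log D)(-im)\to\Omega_{D}^{j-1}\otimes\mathcal{O}_{D}(-im)\to 0$, and Bott's vanishing theorem on $\PP^{n}$, one identifies the $\zeta^{i}$-eigenspace of $H^{n-p}(X,\Omega_{X}^{p})$ with a definite graded piece $R_{F}^{\,a(p,i)}$ of the Jacobian ring $R_{F}:=\CC[x_{0},\dots,x_{n}]/(\partial F/\partial x_{0},\dots,\partial F/\partial x_{n})$, the summand $i=0$ being the contribution of the fixed $\PP^{n}$ (which will not be used). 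Applying the same method to the normal bundle sequence $0\to\Theta_{X}\to\Theta_{\hat{L}}|_{X}\to N_{X/\hat{L}}=f^{*}\mathcal{L}^{k}\to 0$ of Proposition~\ref{prop2.2}, each eigenspace $H^{1}(X,\Theta_{X})_{(c)}$ gets identified with a graded piece $R_{F}^{\,b(c)}$ (up to a harmless one-dimensional correction in the Calabi--Yau case $\kappa_{X}\cong\mathcal{O}_{X}$), and one checks that under these identifications every component of $\lambda_{p}$ on $H^{1}(X,\Theta_{X})_{(c)}$ is, up to a nonzero scalar, the polynomial multiplication map
\[
R_{F}^{\,b(c)}\longrightarrow\Hom\!\bigl(R_{F}^{\,a(p,i)},\,R_{F}^{\,a(p,i)+b(c)}\bigr),
\]
with $i$ running over the eigenvalues permitted by $\mu_{k}$-equivariance. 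This is the familiar infinitesimal Torelli bookkeeping for hypersurfaces, carried through with the extra twist parameter $i$.

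Next comes Macaulay duality. The ring $R_{F}$ is Artinian Gorenstein, generated in degree one, with socle in degree $\sigma:=(n+1)(N-2)$, and the pairing $R_{F}^{\,e}\times R_{F}^{\,\sigma-e}\to R_{F}^{\,\sigma}\cong\CC$ is perfect. Hence $R_{F}^{\,a}\cdot R_{F}^{\,b}=R_{F}^{\,a+b}$ whenever $a+b\le\sigma$, so the kernel of $R_{F}^{\,b(c)}\to\Hom(R_{F}^{\,a(p,i)},R_{F}^{\,a(p,i)+b(c)})$ consists of the elements of $R_{F}^{\,b(c)}$ annihilating $R_{F}^{\,\sigma-b(c)}$, and therefore vanishes as soon as $a(p,i)+b(c)\le\sigma$. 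So it remains to exhibit, for a single $p$ and for every eigenvalue $\zeta^{c}$, an index $i$ with $R_{F}^{\,a(p,i)}\neq 0$ and $a(p,i)+b(c)\le\sigma$. Computing the degrees $a(p,i),b(c)$ in terms of $n,k,m$, such a $p$ exists whenever $n\ge 3$, and also when $n=2$ and $h^{2,0}(X)>0$ (equivalently $m(k-1)\ge 3$). When $n=2$ and $h^{2,0}(X)=0$, only $p\in\{1,2\}$ are admissible and for each of them the relevant cohomology group vanishes ($H^{2}(X,\mathcal{O}_{X})$ for $p=1$, $H^{0}(X,\Omega_{X}^{2})$ for $p=2$), so $\lambda_{p}$ is the zero map; such an $X$ satisfies the theorem only if $H^{1}(X,\Theta_{X})=0$. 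Finally, a direct check shows that among smooth cyclic covers of $\PP^{2}$ one has $h^{2,0}(X)=0$ and $H^{1}(X,\Theta_{X})\neq 0$ precisely for the triple cover branched along a cubic curve and the double cover branched along a quartic curve; these are the two exceptions.

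The conceptual steps above are routine once the eigenspace description is in hand. \textbf{The main obstacle} is the explicit degree computation: determining $a(p,i)$ and $b(c)$, verifying that a single $p$ can be paired with an admissible twist $i$ for every eigenvalue $\zeta^{c}$, and conversely showing that in the two exceptional cases \emph{no} value of $p$ works, so that these are the only failures. The boundary situations, where $a(p,i)+b(c)=\sigma$ or where some eigenspace $R_{F}^{\,a(p,i)}$ happens to vanish, are the delicate points and must be handled with care.
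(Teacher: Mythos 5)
The paper does not prove this statement at all: it is quoted verbatim from Wehler, whose argument (as the paper recalls in Section 5.1) verifies Flenner's criterion for the resolution $0\to f^*\LL^{-k}\to g^*\Omega^1_{\hat{L}}\to\Omega^1_X\to 0$ coming from the embedding of $X$ into the $\PP^1$-bundle $\hat{L}$, i.e.\ reduces $\lambda_p$ to a cohomology vanishing plus a surjectivity of a multiplication pairing of sections of line bundles on $X$. Your route is genuinely different: you view $X$ as the weighted hypersurface $y^k=F$ in $\PP(1^{n+1},m)$, identify the character eigenspaces of $H^{n-p}(X,\Omega^p_X)$ and $H^1(X,\Theta_X)$ with graded pieces of the Jacobian ring $R_F$, and use Gorenstein/Macaulay duality. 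The duality step itself is fine: since $R_F$ is a quotient of the polynomial ring, $R_F^a\cdot R_F^{\sigma-a-b}=R_F^{\sigma-b}$ automatically, so an element of $R_F^{b}$ killing $R_F^{a}$ kills $R_F^{\sigma-b}$ and hence vanishes when $a+b\le\sigma$. This is the Carlson--Griffiths/Donagi--Tu style argument, and when it works it gives more (e.g.\ effective control of where injectivity fails eigenvalue by eigenvalue) than Flenner's criterion does.

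There are, however, two genuine gaps. First, the identifications you assert -- that \emph{every} eigenspace of $H^1(X,\Theta_X)$ (not just the deck-invariant one, which corresponds to deforming $F$) is a graded piece of $R_F$, and that each component of $\lambda_p$ becomes polynomial multiplication up to a nonzero scalar -- are not formal consequences of the Esnault--Viehweg splitting and Bott vanishing; they require the Griffiths--Steenbrink--Dolgachev residue calculus for quasi-smooth weighted hypersurfaces, including the verification that the infinitesimal automorphisms of $\PP(1^{n+1},m)$ are absorbed into the Jacobian ideal of $y^k-F$. These facts are citable but must be cited or proved, and they come with low-degree caveats precisely in the range where the exceptions live. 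Second, and more seriously, you defer the entire degree bookkeeping -- computing $a(p,i)$ and $b(c)$, exhibiting for each $(n,m,k)$ outside the two exceptional cases a single $p$ with an admissible twist $i$ for every character $c$, and checking the boundary cases $a(p,i)+b(c)=\sigma$ and the possible vanishing of $R_F^{a(p,i)}$. That computation is not a routine afterthought: it is exactly where the two exceptional surfaces are detected, and until it is carried out the statement is not proved. As written, your text is a correct and promising plan rather than a proof.
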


%Meanwhile, Wehler proves that every deformation of $X$ can be realized as a cyclic covering of $\PP^n$ if $n\geq 2$ and $X$ is not a $K3$-surface. 
In Proposition \ref{rk3.8}, we will prove that the deformation functor $\Def_X$ is smooth. First of all, we briefly recall some notions of deformations of morphisms. Let $f: X\rightarrow Z$ be a morphism between $k$-schemes, and let $A$ be an artinian local $k$-algebra. An infinitesimal deformation $(\mathcal{X}, F)$ over $A$ of the morphism $f$ is the cartesian diagrams 
\begin{equation}\label{3.6.1}
\xymatrix{
X \ar[d]^f \ar[r] &\mathcal{X} \ar[d]^F\\
Z \ar[d] \ar[r] & Z\times \Spec(A) \ar[d]^p\\
\Spec(k)\ar[r] & \Spec(A),
}
\end{equation}
where $p\circ F$ is flat. Two deformations $(\mathcal{X}, F)$ and $(\mathcal{X}', G)$ are isomorphic if there exists an isomorphism $\psi : \mathcal{X}\overset{\sim}{\rightarrow} \mathcal{X}'$ such that $G\circ \psi=F$ and the restriction of $\psi$ to the closed fiber $X$ gives the identity $\Id_X$. One defines the deformation functor of the morphism $f$ 
\[
\Def_{X/Z}(A)=\{\text{isomorphic classes of infinitesimal deformations of $f$ over $A$}\}.
\]
We call the functor $\Def_{X/Z}$ a local Hilbert functor, denoted by  $H^{X}_{Z}$, if $f$ and $F$ in the diagram (\ref{3.6.1}) are closed immersions. Let $\varrho : \Def_{X/Z}\rightarrow \Def_X$ and $\delta: H^{X}_{Z} \rightarrow \Def_X$ be the natural forgetful maps. \begin{theorem}(\cite[Theorem 3.9]{Weh86})\label{thm3.8}
Let $f: X\rightarrow \PP_{\CC}^n$ be a smooth $k$-fold cyclic covering of dimension $n\geq2$. Recall in the diagram (\ref{linebundleL}) that $\hat{L}$ denotes the projective bundle. If $X$ is not a $K3$-surface, then the forgetful maps $\varrho : \Def_{X/\PP^n}\rightarrow \Def_X$ and $\delta: H^{X}_{\hat{L}} \rightarrow \Def_X$ are both smooth.
\end{theorem}

The smoothness of the map $\varrho : \Def_{X/\PP^n}\rightarrow \Def_X$ shows that any small deformation of $X$ is again a cyclic covering of $\PP^n$ except $K$3 surfaces.

\begin{proposition}\label{rk3.8}
Let $X$ be a smooth $k$-fold cyclic covering of $\PP_{\mathbb{C}}^n$. The deformation functor $\Def_X$ is smooth.
%where $\hat{L}$ is given in (\ref{linebundleL}).
\end{proposition}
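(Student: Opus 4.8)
The plan is to deduce the smoothness of $\Def_X$ from Wehler's Theorem~\ref{thm3.8} by showing that the \emph{source} of the forgetful map $\delta\colon H^{X}_{\hat{L}}\to\Def_X$ is itself a smooth functor of Artin rings. Two easy cases come first. If $n=1$ then $X$ is a smooth projective curve, so $H^2(X,\Theta_X)=0$ and $\Def_X$ is automatically unobstructed; assume henceforth $n\ge 2$. If $X$ is a $K3$ surface, then $\kappa_X\cong\OO_X$, hence $\Theta_X\cong\Omega^1_X$, and Serre duality gives $H^2(X,\Theta_X)\cong H^0(X,\Omega^1_X)^{\vee}=0$, so $\Def_X$ is smooth; by Proposition~\ref{prop2.2}(iii) this case occurs only when $X$ is a double cover of $\PP^2$ branched along a sextic or a $4$-fold cyclic cover of $\PP^2$ branched along a quartic. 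So from now on $n\ge 2$ and $X$ is \emph{not} a $K3$ surface.

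In this remaining case Theorem~\ref{thm3.8} applies (with $Z=\PP^n$) and shows that $\delta\colon H^{X}_{\hat{L}}\to\Def_X$ is smooth. The key step is that the local Hilbert functor $H^{X}_{\hat{L}}$ is itself smooth, i.e.\ $H^1(X,N_{X/\hat{L}})=0$. By Proposition~\ref{prop2.2}(ii) we have $N_{X/\hat{L}}\cong f^*\LL^k$, and since $Z=\PP^n$ we may write $\LL\cong\OO_{\PP^n}(m)$ for some $m\ge 1$. As $f$ is finite the Leray spectral sequence collapses, and the projection formula together with $f_*\OO_X=\bigoplus_{i=0}^{k-1}\LL^{-i}$ yields
\[
H^1(X,N_{X/\hat{L}})\;\cong\;H^1\Bigl(\PP^n,\ \LL^k\otimes\textstyle\bigoplus_{i=0}^{k-1}\LL^{-i}\Bigr)\;=\;\bigoplus_{j=1}^{k}H^1\bigl(\PP^n,\OO_{\PP^n}(jm)\bigr)\;=\;0,
\]
every summand being a positive twist of $\OO_{\PP^n}$. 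Hence $H^{X}_{\hat{L}}$ is unobstructed, and therefore smooth.

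Finally I invoke the formal principle that a smooth morphism of functors of Artin rings with smooth source has smooth target: applying the defining surjectivity of $\delta$ to the surjection $B\to\CC$ shows $H^{X}_{\hat{L}}(B)\to\Def_X(B)$ is surjective for every $B\in\mathscr{C}_{\CC}$; then, given a surjection $B\to A$ in $\mathscr{C}_{\CC}$ and $\alpha\in\Def_X(A)$, one lifts $\alpha$ to $H^{X}_{\hat{L}}(A)$, lifts further to $H^{X}_{\hat{L}}(B)$ using smoothness of the source, and pushes it down to a lift of $\alpha$ in $\Def_X(B)$. Thus $\Def_X$ is smooth, completing all cases. The only genuinely delicate point is that Theorem~\ref{thm3.8} excludes $K3$ surfaces, so the main task is to recognize that this excluded case is precisely the one rescued by the classical vanishing $H^2(X,\Theta_X)=0$; everything else is either formal or reduces to the cohomology of line bundles on $\PP^n$.
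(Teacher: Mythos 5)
Your proposal is correct and follows essentially the same route as the paper: reduce to Theorem~\ref{thm3.8}, show the local Hilbert functor $H^{X}_{\hat L}$ is unobstructed via $N_{X/\hat L}\cong f^*\LL^k$ and the projection formula with $f_*\OO_X=\bigoplus_{i=0}^{k-1}\LL^{-i}$, and handle the curve and $K3$ cases separately. The only difference is that you spell out the Serre-duality vanishing $H^2(X,\Theta_X)\cong H^0(X,\Omega^1_X)^{\vee}=0$ for the $K3$ case, which the paper simply cites as well known.
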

\begin{proof}
If $n=1$ then it is obvious that $\Def_X$ is smooth. Therefore, we can assume that $n$ is at least $2$. The obstruction space of the local Hilbert functor $H^X_{\hat{L}}$ is isomorphic to $H^1(X, N_{X/\hat{L}})$. By Proposition \ref{prop2.2} (iii), we have
\[
H^1(X, N_{X/\hat{L}})=H^1(X, f^*\LL^k)=H^1(\PP^n_{\mathbb{C}}, \LL^k\otimes f_*\OO_X)=0.
\]
Hence $H^X_{\hat{L}}$ is smooth. If $X$ is not a $K$3 surface, the deformation functor $\Def_X$ is smooth since the forgetful morphism $\delta$ is smooth. When $X$ is a $K$3 surface, the smoothness of $\Def_X$ is well-known.
\end{proof}

\begin{theorem} \label{thmunobs}
Suppose that $X$ is a smooth $k$-fold cyclic covering of $\PP^n_{\mathbb{C}}$. Let $G$ be a finite subgroup of the automorphisms $\Aut(X)$. If $G$ acts on $H^n(X,\mathbb{C})$ trivially, then the equivariant deformations of $(X, G)$ is unobstructed, i.e. the deformation functor $\Def_X^G$ is smooth except
\begin{itemize}
\item $X$ is a 3-fold covering of $\PP^2$ branched along a cubic curve;
\item $X$ is a 2-fold covering of $\PP^2$ branched along a quartic curve;
%\item $X$ is a 2-fold covering of $\PP^2$ branched along a sextic curve;
%\item $X$ is a 4-fold covering of $\PP^2$ branched along a quartic curve.
\end{itemize}
Furthermore, under the above hypotheses, we have $\Def_X^G=\Def_X$.
\end{theorem}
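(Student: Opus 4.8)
The plan is to combine the infinitesimal Torelli theorem (Theorem \ref{thm3.5}), the general smoothness criterion for the forgetful functor (Proposition \ref{prophsm}), and the unobstructedness of $\Def_X$ (Proposition \ref{rk3.8}). The key observation is that all the hypotheses line up: we want to apply Proposition \ref{prophsm} with the finite group $G\subset\Aut(X)$ acting trivially on $H^n(X,\mathbb{C})$, and the only missing input for that proposition is the injectivity of some cup product map $\lambda_p$, which is exactly what Wehler's Theorem \ref{thm3.5} supplies — with precisely the two exceptional cases listed in the statement.

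First I would dispose of the case $n=1$: a smooth cyclic covering of $\PP^1$ is a smooth projective curve, $\Def_X$ is smooth, and $H^1(X,\Theta_X)^G\hookrightarrow H^1(X,\Theta_X)$ always; but in fact for $n=1$ the faithfulness question is not what is being asked here, so one simply notes $n\geq 2$ is the interesting range, or handles $n=1$ trivially since $H^2(X,\Theta_X)=0$. Next, assume $n\geq 2$ and that $X$ is not one of the two exceptional coverings. By Theorem \ref{thm3.5}, the cup product $\lambda_p\colon H^1(X,\Theta_X)\to \Hom(H^{n-p}(X,\Omega_X^p),H^{n-p+1}(X,\Omega_X^{p-1}))$ is injective for some $p$. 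Since by hypothesis $G$ acts trivially on $H^n(X,\mathbb{C})$, hence (by the Hodge decomposition and its compatibility with the $G$-action in characteristic zero) trivially on each Hodge piece $H^{n-p}(X,\Omega_X^p)$, Lemma \ref{lemma3.4} applies and gives $H^1(X,\Theta_X)^G=H^1(X,\Theta_X)$. In particular the tangent map $dh$ of the forgetful functor $h\colon\Def_X^G\to\Def_X$ is an isomorphism, so Proposition \ref{prophsm} shows $h$ is smooth; equivalently, by Proposition \ref{3.1} applied to $F=\Def_X^G$, $H=\Def_X$ (the obstruction map is the natural inclusion $H^2(X,\Theta_X)^G\hookrightarrow H^2(X,\Theta_X)$, automatically injective), $h$ is smooth.

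Then I would invoke Proposition \ref{rk3.8}: $\Def_X$ is smooth. A smooth morphism of functors of Artin rings with smooth target has smooth source, so $\Def_X^G$ is smooth, i.e. the equivariant deformations of $(X,\iota)$ are unobstructed. Finally, to get the sharper conclusion $\Def_X^G=\Def_X$, note that $h$ smooth together with $dh$ an isomorphism on tangent spaces forces $h$ to be an isomorphism of functors: smoothness gives surjectivity of $\Def_X^G(B)\to\Def_X^G(A)\times_{\Def_X(A)}\Def_X(B)$ for every small surjection $B\to A$, and an induction on the length of $A$ using that both functors are smooth with $h$ bijective on $k[\epsilon]$-points yields bijectivity of $h$ at every level. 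I expect the only genuinely delicate point to be the bookkeeping that the triviality of the $G$-action on $H^n(X,\mathbb{C})$ propagates to triviality on the individual Hodge summands $H^{n-p}(X,\Omega_X^p)$ — this is where characteristic zero is used, via the $G$-equivariance of the Hodge decomposition — and then feeding this correctly into Lemma \ref{lemma3.4}; the rest is a formal chase through the deformation-functor formalism already set up above, with the exceptional cases being exactly those excluded by Wehler's theorem.
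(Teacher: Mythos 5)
Your proposal is correct and follows essentially the same route as the paper: Wehler's Theorem \ref{thm3.5} supplies the injectivity of $\lambda_p$ outside the two exceptional cases, Lemma \ref{lemma3.4} and Proposition \ref{prophsm} give smoothness of the forgetful functor $h$, Proposition \ref{rk3.8} gives smoothness of $\Def_X$ and hence of $\Def_X^G$, and the tangent-space identification forces $\Def_X^G=\Def_X$. The extra details you supply (the $G$-equivariance of the Hodge decomposition and the formal argument that a smooth morphism inducing an isomorphism on tangent spaces is an isomorphism of functors) are exactly the points the paper leaves implicit.
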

\begin{proof}
%Note that $X$ is a 2-fold covering of $\PP^2$ branched along a sextic curve or a 4-fold covering of $\PP^2$ branched along a quartic curve if and only if $X$ is a $K3$ surface. 
By Proposition \ref{prophsm} and Theorem \ref{thm3.5}, we conclude that the forgetful functor $h:\Def^G_X\rightarrow \Def_X$ is smooth in our case. We prove the deformation functor $\Def_X$ is smooth in Proposition \ref{rk3.8}, then it follows that $\Def^G_X$ is also smooth. By the diagram (\ref{eqr-inf-tor}), the differential map between the first-order deformations
\[
dh: \Def^G_X(\mathbb{C}[\epsilon]) =H^1(X, \Theta_X)^G \rightarrow H^1(X, \Theta_X)=\Def_X(\mathbb{C}[\epsilon])
\]
is an identity. It implies that $\Def^G_X=\Def_X$.
\end{proof}
\begin{remark} The theorem amounts to say that an automorphism in $G$ can be deformed to an automorphism of any deformation of $X$. Instead of using the equivariant deformation theory, the second author provides an alternative view point to prove this theorem from the variational Hodge conjectures for graph cycles, cf. \cite[Corollary 3.3]{Pan16}.
\end{remark}
\section{Automorphisms of Finite Cyclic Coverings}

\begin{lemma}\label{lemma4.2} \label{prop4.3}
Let $f: X\rightarrow \PP^n$ be a smooth $k$-fold cyclic covering branched along a smooth hypersurface $D$, and let $\sigma$ be an automorphism of $X$. Denote by $\Cov(X/\PP^n)$ the group of covering transformations. Suppose that the following two conditions

\begin{itemize}
\item the line bundle $\sigma^*f^*\OO_{\PP^n}(1)$ is isomorphic to $f^*\OO_{\PP^n}(1), \forall \sigma\in \Aut(X)$;
\item $\dim H^0(X, f^*\OO_{\PP^n}(1))=n+1$
\end{itemize}
are satisfied. Then there is a short exact sequence 
\begin{equation}\label{4.5.1}
1\longrightarrow \Cov(X/\PP^n)\longrightarrow \Aut(X)\longrightarrow \Aut_L(D)\longrightarrow 1.
\end{equation}
Here the group $\Aut_L(D)$ indicates the group of the linear automorphisms of $D\subseteq \PP^n$.
\end{lemma}

\begin{proof}

Under our assumptions, every automorphism $\sigma$ induces a unique automorphism $\mu$ of $\PP^n$ such that $f\circ \sigma=\mu\circ f$. In fact, the morphism $f$ gives rise to global sections $s_i=f^*x_i$ of $f^*\OO_{\PP^n}(1)$ for $i=\{0,1,\cdots , n\}$. If $\dim H^0(X, f^*\OO_{\PP^n}(1))=n+1,$ then the set $\{s_0,\cdots ,s_n\}$ forms a basis of the complete linear system $|f^*\OO_{\PP^n}(1)|$. If $\sigma^*f^*\OO_{\PP^n}(1)$ is isomorphic to the line bundle $f^*\OO_{\PP^n}(1)$, then it follows that $\sigma^*s_i=\sum\limits_{j=0}^{n}\alpha_{ij} s_j$. Hence, the matrix $(\alpha_{ij})_{0\leq i,j\leq n}$ gives the desired automorphism 
\[
\mu\big(\big[ X_0: X_1,\cdots, X_n \big] \big)=\big[ \sum\limits_{i=0}^{n}\alpha_{0i}X_i,\cdots, \sum\limits_{i=0}^{n}\alpha_{ni}X_i \big]
\] in $\Aut(\PP^n)$.

Through the above argument, the linear automorphism $\mu$ apparently preserves the branch locus $D$. We thus obtain a homomorphism 
\begin{align*}
\Aut(X)&\rightarrow \Aut_L(D)\\
\sigma &\rightarrow \mu |_D
\end{align*} 
with kernel $\Cov(X/\PP^n)$. The homomorphism $\Aut(X)\rightarrow \Aut_L(D)$ is surjective by the definition of the cyclic coverings.
\end{proof}

\begin{lemma} \label{lemma4.3}
Let $X$ be smooth k-fold cyclic covering $f: X\rightarrow \PP^n$ branched along a smooth hypersurface $D$. If $X$ is not a hypersurfaces in $\PP^{n+1}$, then \[ \Dim~H^0(X, f^*\OO_{\PP^n}(1))=n+1.\]
\end{lemma}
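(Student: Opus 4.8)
The plan is to compute $H^{0}(X,f^{*}\OO_{\PP^{n}}(1))$ directly from the splitting of $f_{*}\OO_{X}$ given in Definition \ref{def2.1}, and then to observe that the single numerical situation which would make this space bigger than $n+1$ is precisely the one in which $X$ is a hypersurface in $\PP^{n+1}$, and hence is excluded by hypothesis.

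First I would normalize $\LL$. Since $\Pic(\PP^{n})\cong\Z$ is torsion free and $\LL^{\otimes k}\cong\OO_{\PP^{n}}(\deg D)$ with $\deg D\geq 1$, there is a unique integer $m\geq 1$ with $\LL\cong\OO_{\PP^{n}}(m)$, and then $\deg D=km$. By Definition \ref{def2.1} we have $f_{*}\OO_{X}=\bigoplus_{i=0}^{k-1}\LL^{-i}$, so the projection formula gives
\[
H^{0}\bigl(X,f^{*}\OO_{\PP^{n}}(1)\bigr)\;\cong\;\bigoplus_{i=0}^{k-1}H^{0}\bigl(\PP^{n},\OO_{\PP^{n}}(1-im)\bigr).
\]
If $m\geq 2$, then $1-im<0$ for every $i\geq 1$, so all summands but the one with $i=0$ vanish and the space reduces to $H^{0}(\PP^{n},\OO_{\PP^{n}}(1))$, which has dimension $n+1$. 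Hence the lemma follows once we know that $m=1$ cannot occur under the hypothesis.

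So suppose $\LL\cong\OO_{\PP^{n}}(1)$. Then $\hat{L}=\PP(\OO_{\PP^{n}}\oplus\OO_{\PP^{n}}(-1))$ is the blow-up of $\PP^{n+1}$ at a point, with exceptional divisor the section $C_{\infty}$ associated to the quotient $\EE\to\LL^{-1}$ (it has normal bundle $\OO_{\PP^{n}}(-1)$). Writing $X\subset\hat{L}$ as the zero locus of $x_{1}^{k}-\pi^{*}s\cdot x_{0}^{k}$, where $x_{0}\in\Gamma(\OO_{\PP(\EE)}(1))$ cuts out $C_{\infty}$ and $x_{1}\in\Gamma(\OO_{\PP(\EE)}(1)\otimes\pi^{*}\LL)$ cuts out $C$ (cf. Lemma \ref{lemma2.0.1}(ii)--(iii)), restricting this equation to $C_{\infty}=\{x_{0}=0\}$ forces $x_{1}=0$; hence $X\cap C_{\infty}\subseteq C\cap C_{\infty}=\varnothing$. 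Therefore the blow-down $\hat{L}\to\PP^{n+1}$ restricts to a closed immersion of $X$, identifying it with the degree-$k$ hypersurface $\{w^{k}=\Phi(z_{0},\dots,z_{n})\}\subset\PP^{n+1}$, where $\Phi$ is the degree-$k$ form defining $D$ (read off from (\ref{eqforvl})). This contradicts the assumption that $X$ is not a hypersurface in $\PP^{n+1}$, so $m\geq 2$, and by the previous paragraph we are done.

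The cohomology computation is routine bookkeeping on $\PP^{n}$; the step I expect to require the most care is the last one — verifying that when $m=1$ the ambient space $\hat{L}$ is genuinely a point-blow-up of $\PP^{n+1}$ (for $m\geq 2$ the analogous contraction of $C_{\infty}$ produces a weighted projective space with a quotient singularity instead), and that $X$ misses the contracted section $C_{\infty}$, so that the blow-down really exhibits $X$ as a hypersurface of $\PP^{n+1}$.
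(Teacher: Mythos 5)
Your proof is correct and follows essentially the same route as the paper: write $\LL=\OO_{\PP^n}(m)$, apply the projection formula to $f_*\OO_X=\bigoplus_{i=0}^{k-1}\LL^{-i}$, and note that every twist $\OO_{\PP^n}(1-im)$ with $i\geq 1$ has no sections once $m\geq 2$. The only difference is that the paper merely asserts that the hypothesis is equivalent to $m>1$, whereas you verify the direction actually needed --- that $m=1$ realizes $X$ as a degree-$k$ hypersurface by contracting the disjoint section $C_\infty$ in $\hat{L}=\PP(\OO\oplus\OO(-1))\to\PP^{n+1}$ --- which is a worthwhile addition.
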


\begin{proof} 

Assume that $\LL$ is the line bundle $\OO_{\PP^n}(m)$ such that $\LL^k=\OO_{\PP^n}(D)$. $X$ is not a hypersurface if and only if $m$ is great than $1$. Since $f$ is a finite morphism, we have
\begin{align*}
H^0(X, f^*\OO_{\PP^n}(1))&= H^0(\mathbb{P}^n, \mathcal{O}_{\mathbb{P}^n}(1)\otimes f_*\mathcal{O}_X)\\
&=\bigoplus\limits_{i=0}^{k-1}H^0(\PP^n, \OO_{\PP^n}(1)\otimes \mathcal{L}^{-i}). \nonumber
\end{align*}
Therefore we obtain $\Dim H^0(X, \OO_X(1))=n+1$ when $m > 1$.
\end{proof}

%\begin{proposition}\label{prop4.3}
%With the notations as above, we assume that the two assumptions in Lemma \ref{lemma4.2} hold for $X$ and every auotmorphism $\sigma\in \Aut(X)$. 
%Then we obtain a short exact sequence 
%\begin{equation}\label{4.5.1}
%1\longrightarrow \Cov(X/\PP^n)\longrightarrow \Aut(X)\longrightarrow \Aut_L(D)\longrightarrow 1.
%\end{equation}
%Here the group $\Aut_L(D)$ consists of the linear automorphisms of $D(\subseteq \PP^n)$.
%\end{proposition}

%\begin{remark}\label{rmk4.4}
%When $\mathcal{L}=\mathcal{O}_{\mathbb{P}^r}(1)$, the cyclic covering $X/\PP^n$ amounts to a hypersurface in $\PP^{n+1}$. %The associated represenation problem of a smooth hypersurface is investigated in \cite{Pan15}
%\end{remark}
From now on take $K=\mathbb{C}$. In the following proposition, we investigate the smooth cyclic coverings who satisfy the first condition in Lemma \ref{lemma4.2}.
\begin{proposition}\label{prop4.6}
Let $X$ be a smooth k-fold covering $f: X\rightarrow \PP^n$ branched along a smooth hypersurface $D$. If one of the following conditions hold:
\begin{enumerate}
\item$\dim X\geq 4$;
\item $\dim X=3$ and the branch locus $D$ is a smooth surface in $\PP^3$ with $\deg(D)\geq 4$,
\end{enumerate} then the Picard group of $X$ is generated by the ample line bundle $f^*\OO_{\PP^n}(1)$. In particular, $\sigma^*f^*\OO_{\PP^n}(1)$ is isomorphic to $f^*\OO_{\PP^n}(1)$ for any automorphism $\sigma$ of $X$.

\end{proposition}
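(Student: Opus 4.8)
The plan is to realise $X$ explicitly as a smooth hypersurface lying in the smooth locus of a weighted projective space and then to read off its Picard group from a Lefschetz‑type theorem. Recall from Section~\ref{sectionfcc} that $\LL=\OO_{\PP^n}(m)$ for some integer $m\ge1$ with $\LL^{k}=\OO_{\PP^n}(D)$, so $\deg D=km$; write $D=\{F=0\}$ with $F$ a form of degree $km$ in homogeneous coordinates $x_0,\dots,x_n$ of $\PP^n$. Then the cyclic covering is the hypersurface
\[
X=\{\,w^{k}=F(x_0,\dots,x_n)\,\}\ \subset\ \PP:=\PP(\underbrace{1,\dots,1}_{n+1},\,m),
\]
with $w$ the coordinate of weight $m$: the projection $[x_0:\dots:x_n:w]\mapsto[x_0:\dots:x_n]$ is the finite degree‑$k$ morphism $f$ (everywhere defined, as $X$ misses $x_0=\dots=x_n=0$), the $\mu_k$‑action $w\mapsto\zeta w$ is $\Cov(X/\PP^n)$, and $\{w=0\}\cap X=f^{-1}(D)_{\mathrm{red}}$ is the ramification divisor. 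Intrinsically this is the contraction of the section of $\pi\colon\hat L\to\PP^n$ complementary to $C$: the sheaf $\OO_{\PP(\EE)}(1)\otimes\pi^{*}\LL$ is globally generated and big, the morphism $\phi\colon\hat L\to\PP$ it defines contracts that complementary section $C_\infty$ to $[0:\dots:0:1]$, and as $X\subset\mathbb V(L)=\hat L\setminus C_\infty$ the restriction of $\phi$ is an isomorphism of $X$ onto its image.

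I would first verify that $X$ is a smooth hypersurface inside the smooth locus of $\PP$: since $F$ has zero constant term, $w^{k}=F(x)$ has no solution at $[0:\dots:0:1]$, which for $m\ge2$ is the only singular point of $\PP$; and $X$ is quasi‑smooth, the affine cone $\{w^{k}=F(x)\}$ being smooth away from the origin because $D$ is smooth and $\Char\CC=0\nmid k$. One also records the book‑keeping: $\PP$ is well‑formed, $X$ is well‑formed (disjoint from $\mathrm{Sing}\,\PP$), and $X$ is not a linear cone, since $\deg X=km\notin\{1,m\}$ as $k\ge2$. Under hypothesis $(1)$ or $(2)$ one has $\dim X=n\ge3$. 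Now I would apply the Grothendieck--Lefschetz theorem for the divisor class group of a quasi‑smooth, well‑formed weighted projective hypersurface of dimension $\ge3$ which is not a linear cone — equivalently, the class‑group form of Grothendieck--Lefschetz for $X$ as an ample Cartier divisor on the normal projective variety $\PP$, disjoint from $\mathrm{Sing}\,\PP$: the restriction $\mathrm{Cl}(\PP)\to\mathrm{Cl}(X)$ is an isomorphism. As $\mathrm{Cl}(\PP)=\Z$ is generated by $\OO_{\PP}(1)$, and $X$ is smooth so $\Pic(X)=\mathrm{Cl}(X)$, this yields $\Pic(X)=\Z\langle\OO_X(1)\rangle$; finally $\OO_X(1)=\OO_{\PP}(1)|_X=f^{*}\OO_{\PP^n}(1)$, since the weight‑one coordinates $x_i$ restrict on $X$ to a base‑point‑free system defining $f$. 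This is the assertion $\Pic(X)=\Z=\Z\langle f^{*}\OO_{\PP^n}(1)\rangle$. (In case $(1)$, $\dim X\ge4$, one may avoid weighted projective spaces altogether: by Lemma~\ref{rmk2.4} the smooth divisor $R:=f^{-1}(D)_{\mathrm{red}}\cong D$ on $X$ has $\OO_X(R)=f^{*}\LL$, which is ample, so Grothendieck--Lefschetz gives $\Pic(X)\xrightarrow{\ \sim\ }\Pic(R)\cong\Pic(D)$; since $\dim D=n-1\ge3$ we have $\Pic(D)=\Z\langle\OO_D(1)\rangle$, and $f^{*}\OO_{\PP^n}(1)$ restricts to the generator.)

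For the last sentence of the proposition: $f^{*}\OO_{\PP^n}(1)$ is ample, being the pullback of an ample bundle along the finite morphism $f$; hence for any $\sigma\in\Aut(X)$ the class $\sigma^{*}f^{*}\OO_{\PP^n}(1)$ is again ample, and since $\sigma^{*}$ acts on $\Pic(X)\cong\Z$ by $\pm1$ and must carry the ample generator to an ample class, it acts by $+1$, i.e. $\sigma^{*}f^{*}\OO_{\PP^n}(1)\cong f^{*}\OO_{\PP^n}(1)$.

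The delicate point is the appeal to Grothendieck--Lefschetz: one must be sure it applies to the \emph{specific} hypersurface $X$ — not merely a generic member of $|\OO_\PP(km)|$ — and already in the boundary range $\dim X=3$; this is exactly where quasi‑smoothness, the disjointness $X\cap\mathrm{Sing}\,\PP=\varnothing$, and the hypothesis $\dim X\ge3$ enter. If one prefers not to quote the weighted‑hypersurface statement, the same conclusion can be obtained by hand: $\PP\setminus X$ is affine, so $H^{2}(X,\Q)=H^{2}(\PP,\Q)=\Q$ by Artin vanishing and the long exact sequence of the pair; $X$ is simply connected (Lefschetz for fundamental groups), whence $H^{2}(X,\Z)\cong\Z$; and $H^{1}(X,\OO_X)=H^{2}(X,\OO_X)=0$, because $f_{*}\OO_X=\bigoplus_{i=0}^{k-1}\OO_{\PP^n}(-im)$ and $\PP^n$ has no intermediate cohomology, so $c_1\colon\Pic(X)\xrightarrow{\ \sim\ }H^{2}(X,\Z)$; one then checks, as above, that $f^{*}\OO_{\PP^n}(1)$ is a generator.
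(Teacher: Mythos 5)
Your proof is correct, but it takes a genuinely different route from the paper. The paper splits into two cases: for $\dim X\geq 4$ it uses the ample ramification divisor $B=f^{-1}(D)_{\mathrm{red}}\cong D$ (Lemma~\ref{rmk2.4}) and two applications of the classical Lefschetz hyperplane theorem ($\Pic(X)\cong\Pic(B)$ and $\Pic(\PP^n)\cong\Pic(D)$, the latter needing $\dim D\geq 3$); for $\dim X=3$ it first computes $H^1(\OO_X)=H^2(\OO_X)=0$ so that $c_1\colon\Pic(X)\to H^2(X,\Z)$ is an isomorphism, proves the statement for \emph{very general} $D$ of degree $\geq 4$ via Noether--Lefschetz, and then propagates it to all members by deformation invariance of $H^2(-,\Z)$. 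You instead realise $X$ once and for all as a quasi-smooth, well-formed hypersurface in $\PP(1,\dots,1,m)$ avoiding the singular locus and quote the Grothendieck--Lefschetz theorem for class groups of weighted hypersurfaces of dimension $\geq 3$ (Mori's theorem); this is uniform in the dimension, avoids the genericity-plus-specialisation step, and in fact proves more than stated, since it does not need $\deg D\geq 4$ when $\dim X=3$ (consistently with the paper, which handles the cubic threefold separately in Theorem~\ref{finiteauto}). The trade-off is that you lean on a heavier citation whose applicability to the \emph{specific} $X$ is exactly the delicate point, which you correctly flag and whose hypotheses (quasi-smoothness, well-formedness, not a linear cone, $X\cap\mathrm{Sing}\,\PP=\varnothing$) you verify. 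One small caveat: in your ``by hand'' fallback at the end, the final step that $f^*\OO_{\PP^n}(1)$ is a \emph{primitive} generator of $H^2(X,\Z)\cong\Z$ (rather than a multiple of the generator) is asserted but not argued; in dimension $\geq 4$ it follows by restricting to $B\cong D$, but in dimension $3$ it genuinely requires either the class-group Lefschetz isomorphism $\mathrm{Cl}(\PP)\xrightarrow{\sim}\mathrm{Cl}(X)$ or something like the paper's Noether--Lefschetz-plus-deformation argument, so the fallback is not quite a self-contained substitute for the main route.
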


\begin{proof}
We denote by $\LL$ the line bundle the as same notation in Definition \ref{def2.1}, which defines a cyclic covering. The ramification divisor $R$ is an integral ample divisor since $\OO_X(R)\simeq f^*\LL$, see Lemma \ref{lemma2.0.1}.
\begin{enumerate}[(i)]
\item Suppose that the dimension of $X$ is at least 4. The Lefschetz hyperplane section theorem gives the isomorphism
\[
\mu^*: \Pic(X)\simeq \Pic(R).
\] 
induced by the inclusion $\mu: R\hookrightarrow X$. In fact, we have the following natural commutative diagram
\begin{equation} \label{4.7.2}
\xymatrix{
\Pic(\PP^n)\ar[r]^{\nu^*} \ar[d]^{f^*}&\Pic(D) \ar[d]^{f|_B^*}\\
\Pic(X)\ar[r]^{\mu^*}&\Pic(B).
}
\end{equation}
Again by the Lefschetz hyperplane section theorem, the homomorphism $\nu^*$ induced by the natural inclusion is an isomorphism. And $f|_R: R\rightarrow D$ is an isomorphism of varieties. Then it follows that
\[
\Pic(X)=\mathbb{Z}\langle f^*\OO_{\PP^n}(1)\rangle.
\]

\item Suppose that the dimension of $X$ is 3. It is easily to see that
\[
H^j(X, \OO_X)\simeq H^j(\PP^n, f_*\OO_X)=\bigoplus\limits_{i=0}^{k-1}H^j(\PP^3, \LL^{-i})=0~\text{for}~ j=1,2.
\]
Therefore the cycle class map $c_1: \Pic(X)\rightarrow H^2(X, \mathbb{Z})$ is an isomorphism for any smooth cyclic covering $X$.

Let $D$ be a very general smooth surface in $\PP^3$ with $\text{deg}(D)\geq 4$, the Noether-Lefschetz Theorem yields an isomorphism $\nu^*: \Pic(\PP^3)\rightarrow \Pic(D)$. Hence the induced map $\mu^*: \Pic(X)\rightarrow \Pic(R)$ is surjective since it admits an inverse section $f^*\circ \nu^{*-1}\circ f^*|_R^{-1}$, see the diagram (\ref{4.7.2}).

On the other hand, the restriction map $H^2(X, \mathbb{Z})\rightarrow H^2(R, \mathbb{Z})$ is injective by the Lefschetz hyperplane theorem, which implies $\mu^*:\Pic(X)\rightarrow \Pic(B)$ is injective. Therefore, we have 
\[
\Pic(X)\simeq \Pic(\PP^3)=\mathbb{Z}\langle\OO_{\PP^n}(1)\rangle.
\] 
for a very general $X$. 

Since the second cohomolgy group $H^2(-, \mathbb{Z})$ is a deformation invariant. The assertion holds for any cyclic covering $X$ of $\PP^3$ branched along a smooth surface $D$ with $\deg D\geq 4$.
\end{enumerate}
\end{proof}

\begin{theorem}[Finiteness of Automorphisms]\label{finiteauto}
Let $X$ be a smooth finite cyclic covering of $\PP^n_{\mathbb{C}}$  branched along a smooth hypersurface $D$ of degree $d$. Assume that $X$ is not a quadric hypersurface and the dimension of $X$ is at least 3. Then the automorphism group $\Aut (X)$ is finite. Moreover, if $X$ is very general, the automorphism group $\Aut(X)=\Cov(X/\PP^n)$. 
\end{theorem}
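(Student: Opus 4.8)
The plan is to split according to whether or not $X$ is isomorphic to a hypersurface in $\PP^{n+1}$; by the proof of Lemma \ref{lemma4.3} this is the same as asking whether the defining line bundle $\LL$ is $\OO_{\PP^n}(1)$ or $\OO_{\PP^n}(m)$ with $m\geq 2$ (here $k\geq 2$, as a $1$-fold covering is just $\PP^n$). If $X$ is a hypersurface in $\PP^{n+1}$, then $X$ has degree $k=\deg D$, and since $\dim X=n\geq 3$ and $X$ is not a quadric we have $k\geq 3$; by the theorem of Matsumura and Monsky on smooth hypersurfaces of degree $\geq 3$ in $\PP^N$ with $N\geq 3$, every automorphism of $X$ is then induced by a projective linear transformation of $\PP^{n+1}$, and the group $\mathrm{Lin}(X)$ of such transformations is finite, so $\Aut(X)$ is finite.

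Now suppose $X$ is not a hypersurface in $\PP^{n+1}$, i.e. $\LL=\OO_{\PP^n}(m)$ with $m\geq 2$, so that $D$ is a smooth hypersurface of degree $d=km\geq 4$ and dimension $n-1\geq 2$. I will apply Proposition \ref{prop4.3}, for which I must check the two hypotheses of Lemma \ref{lemma4.2}. The equality $\dim H^0(X,f^*\OO_{\PP^n}(1))=n+1$ is exactly Lemma \ref{lemma4.3}. For the other hypothesis, note that $\dim X\geq 4$, or $\dim X=3$ with $\deg D\geq 4$; in either case Proposition \ref{prop4.6} gives $\Pic(X)=\Z\langle f^*\OO_{\PP^n}(1)\rangle$, whence $\sigma^*f^*\OO_{\PP^n}(1)\cong f^*\OO_{\PP^n}(1)$ for every $\sigma\in\Aut(X)$, both sides being the ample generator. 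Proposition \ref{prop4.3} then yields the exact sequence
\[1\longrightarrow\Cov(X/\PP^n)\longrightarrow\Aut(X)\longrightarrow\Aut_L(D)\longrightarrow 1 .\]
Here $\Cov(X/\PP^n)=\Z/k\Z$ is finite, and $\Aut_L(D)$ is finite by Matsumura--Monsky, $D$ being a smooth hypersurface of degree $\geq 3$. An extension of a finite group by a finite group is finite, so $\Aut(X)$ is finite.

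For the last assertion it suffices, in the non-hypersurface case, to show that $\Aut_L(D)=1$ for very general $D$. This is standard: by the finiteness just used, $\Aut_L(D)\neq 1$ forces some prime-order subgroup $\Z/p\hookrightarrow\PGL_{n+1}$ to preserve $D$, and for each of the countably many conjugacy classes of such subgroups the locus of invariant $D$ is a proper closed subvariety of $|\OO_{\PP^n}(d)|$ (a nontrivial element of $\PGL_{n+1}$ acts non-scalarly on $H^0(\OO_{\PP^n}(d))$); a very general $D$ avoids this countable union, and then the exact sequence gives $\Aut(X)=\Cov(X/\PP^n)$. In the hypersurface case Proposition \ref{prop4.3} is not available; instead one recovers the branch data intrinsically: for a very general defining polynomial the projection vertex $p\in\PP^{n+1}$ is the unique point $q$ such that every line through $q$ either meets $X$ transversally or meets it in a single point, and the hyperplane $\PP^n$ is the linear span of the ramification divisor, so the finite group $\mathrm{Lin}(X)=\Aut(X)$ preserves $p$ and $\PP^n$, surjects onto $\Aut_L(D)=1$, and hence equals $\Cov(X/\PP^n)$.

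The main obstacle is the hypersurface sub-case of the last assertion, where Proposition \ref{prop4.3} gives nothing: one must pin down the branch divisor $D$ (equivalently the projection vertex $p$) intrinsically from $X$ for a very general defining polynomial and verify that it is uniquely determined, which is a genuine parameter-count argument rather than a formal one. By contrast, everything in the non-hypersurface case is routine assembly of the results already proved together with the standard genericity statement for $\Aut_L(D)$.
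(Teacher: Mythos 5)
Your proof of the finiteness assertion is, step for step, the paper's: the same dichotomy according to whether $\LL=\OO_{\PP^n}(1)$ (so that $X$ is a hypersurface of degree $k=d\geq 3$ in $\PP^{n+1}$, handled by the Matsumura--Monsky/Poonen theorem) or $\LL=\OO_{\PP^n}(m)$ with $m\geq 2$ (so that $\deg D=km\geq 4$, Lemma \ref{lemma4.3} and Proposition \ref{prop4.6} verify the hypotheses of Lemma \ref{lemma4.2}, and the exact sequence of Proposition \ref{prop4.3} exhibits $\Aut(X)$ as an extension of the finite group $\Aut_L(D)$ by $\mathbb{Z}/k\mathbb{Z}$), together with the observation that the one configuration not covered by Proposition \ref{prop4.6}, namely $\dim X=3$ and $\deg D=3$, forces $m=1$ and so falls under the hypersurface case.

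On the ``moreover'' clause: the paper's own proof of Theorem \ref{finiteauto} is silent about it; the statement is only justified later, inside the proof of Theorem \ref{autofaith}, by quoting Matsumura--Monsky for the triviality of $\Aut_L(D)$ for a general smooth hypersurface $D$ of degree $\geq 3$ and then invoking Proposition \ref{prop4.3} --- which is exactly your non-hypersurface argument, so that part is fine. You are also right to single out the hypersurface sub-case as the genuine soft spot: Proposition \ref{prop4.3} is indeed unavailable there (one has $h^0(X,f^*\OO_{\PP^n}(1))=n+2$, so the second hypothesis of Lemma \ref{lemma4.2} fails), and your proposed fix --- that for very general $\Phi$ the vertex $p$ is the \emph{unique} point all of whose secant lines meet $X$ either in $k$ reduced points or in a single point --- is asserted rather than proved. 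A point $q\in X$ is easily excluded for $k\geq 3$ by a general tangent line at $q$, but ruling out a second such point $q\notin X$ amounts to showing that the projection from $q$ cannot be everywhere totally ramified or unramified, and that still requires an argument (or one must substitute a countability argument on finite subgroups of $\mathrm{PGL}_{n+2}$ preserving some $w^k-\Phi$). This gap is not yours alone: the paper elides the hypersurface sub-case of the ``very general'' assertion entirely.
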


\begin{proof}

Note that $X$ is a quadric hypersurface if and only if $d=2$. In the rest we may assume that $d$ is at least 3. 
\begin{itemize}
\item $X$ is a smooth hypersurface in $\PP^{n+1}$ with $n\geq 3$. Poonen proves that for a smooth hypersurface $Y\subset \PP^m$ of degree $l$, the linear automorphism group $\Aut_L(Y)$ is finite if $m\geq 2$ and $l\geq 3$, and $\Aut(Y)=\Aut_{L}(Y)$ if $m\neq 2, l\neq3$ or $m\neq 3, l\neq 4$(\cite[Theorem 1.1, Theorem 1.3]{Poo05}). Therefore, in our case $\Aut(X)=\Aut_{L}(X)$. In particular, the automorphism group $\Aut(X)$ is finite.

\item $X$ is not a hypersurface. Then the two conditions in Lemma \ref{lemma4.2} hold for $X$ by Lemma \ref{lemma4.3} and Proposition \ref{prop4.6}. Therefore, by the exact sequence (\ref{4.5.1}) in Lemma \ref{lemma4.2} and the above Poonen's result, we know $\Aut(X)$ is finite.

For a very general $X$, we have $\Aut(X)=\Cov(X/\PP^n)$ due to the classical result in \cite{MM}, which shows $\Aut_L(D)$=\{id\} for a very general hypersurface $D$ with $\deg(D)\geq 3$ and $\dim D\geq 2$.
\end{itemize}
\end{proof}

\begin{lemma}\label{Covfaith}
Let $X$ be a smooth $k$-fold cyclic covering of a projective space $\PP^n_\CC$ branched along a smooth divisor $D$. The representation on the group of covering transformations
\[
\varphi|_{\Cov(X/\PP^n)}: \Cov(X/\PP^n)\rightarrow\mathrm{GL}(\textnormal{H}^n(X, \mathbb{C}))
\]
is faithful. 
\end{lemma}

\begin{proof}

Let $\LL$ be the line bundle on $\PP^n$ such that $\LL^k=\OO_{\PP^n}(D)$. There is a decomposition of the sheaf of differential forms(see \cite[Lemma 3.16]{EV92})
\begin{equation}\label{decom-diff}
f_*\Omega^q_X=\Omega^q_{\PP^n}\oplus\bigoplus\limits_{i=1}^{k-1} \Omega^q_{\PP^n}(\log D)\otimes\mathcal{L}^{-i}.
\end{equation}

If $g$ is a generator of the group $\Cov(X/\PP^n)$, for any integer $m$ the automorphism $g^m$ acts on $\Omega^q_{\PP^r}(\log D)\otimes \mathcal{L}^{-i}$ by multiplying by $\varrho^{mi}$. It splits the Hodge cohomology group $H^p(X, \Omega^q_X)$ into $\varrho^{mi}$-eigenvalue subspaces
\begin{equation}
H^p(X, \Omega^q_X)=H^p(\PP^n, \Omega^q_{\PP^n})\oplus\bigoplus\limits_{i=1}^{k-1} H^p(\PP^n, \Omega^q_{\PP^n}(\log D)\otimes \mathcal{L}^{-i}).
\end{equation}
If $\psi(g^m)$ is $\Id_{H^{n}(X,\mathbb{C})}$, then $m$ is equal to $0$ modulo $k$. Hence the representation is faithful.

\end{proof}

\begin{proposition} \label{propfaithsurface}
Let $f:X\rightarrow \PP^2$ be a smooth finite cyclic covering over $\PP^2_{\CC}$. If $X$ is not a quadric surface, then the action of $\Aut(X)$ on $H^2(X,\mathbb{C})$ is faithful.
\end{proposition}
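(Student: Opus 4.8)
Write $\LL=\OO_{\PP^2}(m)$, so that the branch curve $D$ has degree $mk$; since $X$ is connected we have $m\ge 1$ and $k\ge 2$. By Proposition \ref{prop2.2} and the formula (\ref{eqcanonical}), $\kappa_X=f^*\OO_{\PP^2}(e)$ with $e:=m(k-1)-3$, and I would organize the proof around the sign of $e$. Up to the excluded quadric, the cyclic covers of $\PP^2$ fall into three families: if $e\ge 1$ then $X$ is of general type; if $e=0$ then $(m,k)\in\{(1,4),(3,2)\}$, so $X$ is either the smooth quartic surface in $\PP^3$ or the double cover of $\PP^2$ branched along a smooth sextic, in both cases a $K3$ surface; and if $e<0$ then $m(k-1)\in\{1,2\}$, which — apart from $(m,k)=(1,2)$, the quadric surface — leaves $(m,k)=(1,3)$ (the smooth cubic surface in $\PP^3$) and $(m,k)=(2,2)$ (the double cover of $\PP^2$ branched along a smooth quartic curve), del Pezzo surfaces of degree $3$ and $2$. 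Note that these last two are exactly the exceptional cases in Wehler's infinitesimal Torelli theorem (Theorem \ref{thm3.5}), which is why they need a separate treatment here.

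The plan is to handle the three families in turn. When $e\ge 1$ the surface $X$ is of general type, and the assertion is precisely Lemma \ref{lemma4.8}. When $e=0$, $X$ is a complex $K3$ surface, and the faithfulness of the $\Aut(X)$-action on $H^2(X,\CC)$ — in fact already on $H^2(X,\Z)$ — is the classical theorem of Burns--Rapoport, a consequence of the Torelli theorem for $K3$ surfaces; see \cite[Chapter 15]{K3}. This reduces the proposition to the two del Pezzo cases. There the projection formula and Definition \ref{def2.1} give $H^1(X,\OO_X)=H^2(X,\OO_X)=0$, so $H^2(X,\CC)=\Pic(X)\otimes_{\Z}\CC$, and it suffices to prove that the natural homomorphism $\Aut(X)\to\mathrm{GL}(\Pic(X))$ is injective.

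So suppose $\sigma\in\Aut(X)$ fixes every class of $\Pic(X)$; then $\sigma$ fixes each $(-1)$-curve on $X$. For $(m,k)=(1,3)$ one has $\kappa_X^{-1}=f^*\OO_{\PP^2}(1)=\OO_X(1)$, so $X$ is anticanonically embedded in $\PP^3$ and $\sigma$ acts through $\mathrm{PGL}_4$ fixing the $27$ lines of $X$; since the fixed locus of a nonidentity element of $\mathrm{PGL}_4$ is a union of disjoint linear subspaces, and any plane (resp.\ line) of $\PP^3$ contains at most three (resp.\ one) of the $27$ lines, this forces $\sigma=\mathrm{id}$. For $(m,k)=(2,2)$ one again has $\kappa_X^{-1}=f^*\OO_{\PP^2}(1)$, with $h^0(X,\kappa_X^{-1})=3$, so the anticanonical morphism of $X$ is $f$ itself; since $\Aut(X)$ normalizes the Galois group of this canonical degree-two cover, $\sigma$ descends to $\mu\in\Aut(\PP^2)$, and $\mu$ fixes each of the $28$ bitangents of $D$ (the images of the $56$ $(-1)$-curves); as these $28$ bitangents are not concurrent, the argument dual to the previous one forces $\mu=\mathrm{id}$, whence $f\circ\sigma=f$, so $\sigma\in\Cov(X/\PP^2)$, and Proposition \ref{prop4.1} yields $\sigma=\mathrm{id}$. (Alternatively, these two cases follow from the classical fact that $\Aut(X)$ embeds into the Weyl group $W(E_6)$, resp.\ $W(E_7)$, for a del Pezzo surface of degree $3$, resp.\ $2$.) The main obstacle is precisely this del Pezzo case: Wehler's infinitesimal Torelli theorem — the engine behind the general-type case — fails there, so one cannot appeal to the equivariant deformation theory of Section~3 and must instead exploit the explicit geometry of the $27$ lines and the $28$ bitangents; verifying that fixing all $(-1)$-curves, rather than merely a polarization, pins down $\sigma$ is where the real work lies.
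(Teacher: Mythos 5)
Your proposal follows the paper's overall skeleton exactly -- trichotomy by the sign of the canonical bundle, Lemma \ref{lemma4.8} for the general type case, the Burns--Rapoport theorem for the $K3$ case, and a bare-hands argument for the two del Pezzo cases $(m,k)=(1,3)$ and $(2,2)$ -- but in that last (and only nontrivial remaining) step you take a genuinely different route. The paper uses the blowup description: a degree $3$ (resp.\ $2$) del Pezzo surface is $\mathrm{Bl}:X\to\PP^2$ at $6$ (resp.\ $7$) points in general position, an automorphism acting trivially on $H^2$ fixes every exceptional class and hence every exceptional curve, so it descends to a $\rho\in\Aut(\PP^2)$ fixing more than $4$ points in general position, forcing $\rho=\Id$ and then $\sigma=\Id$. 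You instead work with the anticanonical model: the $27$ lines of the cubic surface and the $28$ bitangents of the branch quartic, reducing the degree-$2$ case to $\Cov(X/\PP^2)$ and Proposition \ref{prop4.1}. Both arguments are correct and of comparable length; the paper's has the advantage of treating the two cases uniformly and needing only the elementary fact that a projectivity of $\PP^2$ fixing a projective frame is the identity, while yours stays on the cyclic-covering side and reuses Proposition \ref{prop4.1}, which fits the theme of the paper. One step of yours is imprecisely justified: for the cubic surface you argue via the fixed \emph{locus} of an element of $\mathrm{PGL}_4$, but $\sigma$ only stabilizes each of the $27$ lines setwise, which does not place them inside the fixed locus. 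The claim is still true and easily repaired -- each line meets ten others, so $\sigma$ fixes at least ten points on it and therefore fixes it pointwise, and the $27$ lines contain a projective frame of $\PP^3$ -- or one can simply quote the standard fact (which you mention) that $\Aut(X)\to W(E_6)$, resp.\ $W(E_7)$, is injective for del Pezzo surfaces of degree $3$ and $2$. With that repair the proof is complete.
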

\begin{proof}

Suppose that $X$ is a $k$-fold covering with the line bundle $\LL:=\OO_{\PP^2}(m)$. Recall Proposition \ref{prop2.2} that the canonical linke bundle $\kappa_X$ is 
\begin{equation}\label{eqcanonical}
\kappa_X=f^*(\kappa_{\PP^2}\otimes \LL^{k-1}).
\end{equation}

If $\kappa_X$ is trivial, then $X$ is a $K3$ surface. The conclusion is well known.

If $\kappa_X$ is ample, it is isomorphic to $f^*\OO_{\PP^2}(d)$ for some positive integer $d$. The morphism $f$ gives rise to global sections $\{s_i=f^*x_i~|~i=0,1,2\}$, which generate the line bundle $f^*\OO_{\PP^2}(1)$. Let $N=\binom{d+2}{d}-1$. Consider the d-uple Veronese embedding $\nu_d: \PP^2\hookrightarrow \PP^N$. The composition $h: X\xrightarrow{f} \PP^2\hookrightarrow \PP^N$ associates to the canonical line bundle $\kappa_X$ with globally generated sections $\{s_0^{d}, \ldots , s_2^{d}\}$. Assume that an automorphism $\sigma\in \Aut(X)$ acts trivially on $H^2(X, \mathbb{C})$. Hence we have $\sigma^*=\Id |_{H^0(X, \kappa_X)}$ and $\sigma^*h^*\OO_{\PP^N}(1)=\sigma ^*\kappa_X=h^*\OO_{\PP^N}(1)$. Therefore, $ h\circ \sigma =h$. It induces the following diagram 
\[\xymatrix{ X \ar[dd]_{\sigma} \ar[dr]_{f} \ar[drr]^h \\
& \PP^2 \ar@{^(->}[r] &\PP^N \\
X \ar[ru]^f \ar[urr]_h} \]
Therefore, the automorphism $\sigma$ lies in $\Cov(X/\PP^2)$. By Lemma \ref{Covfaith}, we obtain $g=\Id_X$.

If $\kappa_X$ is anti-ample, i.e., $X$ is a Fano surface, the possible cases are

\begin{itemize}\label{fanotype}
\item $(m, k)=(1,2)$, $X$ is a quadric surface in $\PP^3$;
\item $(m, k)=(2,2)$, $X$ is a 2-fold covering branched over a quartic curve;
\item $(m, k)=(1,3)$, $X$ is a cubic surface in $\PP^3$.
\end{itemize}

The last two cases are del Pezzo surfaces with degree $2$ and $3$ respectively.
Hence, they are blowups of projective planes along $7$ and $6$ points in general position respectively. Denote the blowup by $\tau: X\rightarrow \PP^2$ with exceptional divisor $E_i$. If $\sigma^*=\Id$ on $H^2(X,\mathbb{C})$, then $[\sigma(E_i)]\cdot [E_i]=-1$. Hence $\sigma(E_i)=E_i$ for all $i$. And the automrophism $\sigma$ arises an automorphism $\rho\in \Aut(\PP^2)$ with $\tau \circ \sigma=\rho \circ \tau$ and $\rho$ fixes more than $4$ points in general position. Then It follows that $\rho=\Id_{\PP^2}$ and $\sigma=\Id_X$.
\end{proof}

Now we are able to give the answer to the question \ref{question}\\
 in the introduction. 

\begin{theorem}\label{autofaith}
Let $f: X\rightarrow \PP^n_{\CC}$ be a smooth $k$-fold cyclic covering over the complex numbers with $n\geq 2$. Suppose that $X$ is not a quadric hypersurface. Then the natural representation
\[\varphi: \Aut(X)\longrightarrow \mathrm{GL}(\Ho^n(X, \mathbb{C}))\]
is faithful.
\end{theorem}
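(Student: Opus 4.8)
The plan is to split into the case $\dim X = 2$, which is already handled by Proposition~\ref{propfaithsurface}, and the case $n \geq 3$, where the bulk of the work lies. So assume $n \geq 3$ and let $G := \Ker(\varphi) \subseteq \Aut(X)$ be the subgroup acting trivially on $\Ho^n(X,\CC)$; the goal is to show $G$ is trivial. The first observation is that $G$ is finite: by Theorem~\ref{finiteauto}, since $X$ is not a quadric hypersurface and $\dim X \geq 3$, the whole group $\Aut(X)$ is finite, so in particular $G$ is. This finiteness is what lets us bring equivariant deformation theory to bear.

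Next I would feed $G$ into Theorem~\ref{thmunobs}: since $G$ acts trivially on $\Ho^n(X,\CC)$ by construction, the theorem gives $\Def_X^G = \Def_X$ (and both are smooth), provided we are not in one of the two excluded surface cases — which we are not, since $n \geq 3$. Concretely, $\Def_X^G = \Def_X$ says that the forgetful map $h: \Def_X^G \to \Def_X$ is an isomorphism of functors; in particular every infinitesimal deformation of $X$ canonically carries a $G$-action extending the given one, and the first-order part $\Ho^1(X,\Theta_X)^G = \Ho^1(X,\Theta_X)$ is all $G$-invariant. The upshot I want to extract is that $G$ acts trivially on the base of the (formal, hence by Artin approximation algebraizable) universal deformation of $X$, so $G$ deforms along with $X$: a very general small deformation $X_t$ of $X$ still admits $G$ as a group of automorphisms acting trivially on $\Ho^n(X_t,\CC)$ (cohomology being locally constant, triviality of the $G$-action is preserved).

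Then the strategy is to specialize to a very general member $X_t$ of the family and invoke the second half of Theorem~\ref{finiteauto}: for $X_t$ very general, $\Aut(X_t) = \Cov(X_t/\PP^n) \cong \Z/k\Z$. Hence $G$, viewed as a subgroup of $\Aut(X_t)$, lies inside $\Cov(X_t/\PP^n)$. But by Proposition~\ref{prop4.1} the covering group $\Cov(X_t/\PP^n)$ acts \emph{faithfully} on $\Ho^n(X_t,\CC)$ — indeed the proof there shows a nontrivial power $g^m$ acts nontrivially on the Hodge pieces $\Ho^p(\PP^n,\Omega^q_{\PP^n}(\log D)\otimes \LL^{-i})$ with $1 \leq i \leq k-1$, which are nonzero for a cyclic covering. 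So the only element of $\Cov(X_t/\PP^n)$ acting trivially on $\Ho^n(X_t,\CC)$ is the identity, forcing $G = \{1\}$ in $\Aut(X_t)$, and since $G$ is a fixed finite group that embeds into $\Aut(X_t)$, we get $G = \{1\}$ in $\Aut(X)$ as well.

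The main obstacle — and the point that needs the most care in writing — is the passage "$G$ deforms, hence $G$ acts on the very general deformation $X_t$". Theorem~\ref{thmunobs} gives this at the level of formal deformation functors; to land an actual quasi-projective very general $X_t$ one should either observe that smooth cyclic coverings of $\PP^n$ of fixed type form an irreducible quasi-projective family (parametrized essentially by the choice of branch hypersurface $D$, i.e.\ an open subset of $\PP(\Ho^0(\PP^n,\LL^k))$) on which $\Aut$ is generically just $\Cov$, and match the formal picture to this family via versality, or argue more directly: the equality $\Def_X^G = \Def_X$ means $G$ extends to an automorphism group of the universal family over the deformation space, and since that space dominates (an open subset of) the moduli of cyclic coverings near $[X]$, a very general point of the latter is dominated, giving an $X_t$ with $G \hookrightarrow \Aut(X_t)$. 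One must also double-check that the two exceptional cases of Theorem~\ref{thmunobs} genuinely do not intrude — they have $n=2$, so they are irrelevant once $n \geq 3$ — and that the hypothesis "$X$ not a quadric hypersurface" is exactly what is needed to apply Theorem~\ref{finiteauto}. Everything else is bookkeeping.
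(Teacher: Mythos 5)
Your proposal is correct and follows essentially the same route as the paper: finiteness of $G$ via Theorem~\ref{finiteauto}, propagation of the $G$-action to a general small deformation via Theorem~\ref{thmunobs}, reduction to $\Aut(Y)=\Cov(Y/\PP^n)$ on a very general nearby fibre, and then faithfulness of the covering group on $\Ho^n$ from Proposition~\ref{prop4.1}. The only cosmetic difference is that you quote the ``very general'' clause of Theorem~\ref{finiteauto} directly, whereas the paper re-derives it for the deformed fibre from Theorem~\ref{thm3.8}, the triviality of $\Aut_L(D_Y)$ for general $D_Y$, and Proposition~\ref{prop4.3}; the substance is identical.
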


\begin{proof}
If the dimension of $X$ is $2$, it is a conclusion of Proposition \ref{propfaithsurface}. In the following the dimension of $X$ is at least $3$. 

Let $g$ be any automorphism of $X$ such that $\varphi(g)=\Id$, and let $G$ be the cyclic group generated by $g$. It follows from Theorem \ref{finiteauto} that $G$ is a finite group and Theorem \ref{thmunobs} that the action of $G$ on $X$ can be deformed to an action of $G$ on a very general deformation $Y$ of $X$ with $G\subset \Aut(Y)$. Recall that $Y$ remains a smooth cyclic covering. Hence it follows from Theorem \ref{finiteauto} that $\Aut(Y)=\Cov(Y/\PP^n)$. By the faithfulness of the representation $\varphi|_{\Cov(X/\PP^n)}$, we conclude that the group $G$ is trivial. Therefore, a very general deformation of the automorphism $g$ of $X$ is the identity map. It implies that $g=\Id_X$ by specialization.

\end{proof}

\section{Hodge Decomposition for Finite Cyclic Coverings}

Let $f: \mathfrak{X}\rightarrow S$ be a smooth and proper morphism of schemes. The relative lgebraic de Rham cohomology is defined to be the sheaf of  hyper-cohomology of the algebraic de Rham complex $\Omega^{\bullet}_{\mathfrak{X}/S}$ 
\[
\mathbb{H}^k_{dR}(\mathfrak{X}/S):= \mathbb{R}^kf_*\Omega^{\bullet}_{\mathfrak{X}/S}.
\]
The first spectral sequence of the hypercohomology, which abuts to the relative de Rham cohomology,
\begin{equation}\label{5.0.2}
E^{p,q}_1= R^qf_*\Omega^p_{\mathfrak{X}/S}\Rightarrow \mathbb{H}^{p+q}(\mathfrak{X}, \Omega^{\bullet}_{\mathfrak{X}/S})
\end{equation}
is called the relative Hodge-de Rham spectral sequence.

If $S=\Spec\CC$, the classical Hodge decomposition 
\[
\bigoplus_{p+q=k}H^q(\mathfrak{X}, \Omega^p_{\mathfrak{X}/\CC})=H^k_{dR}(\mathfrak{X}/\CC)
\]
implies that the spectral sequence (\ref{5.0.2}) of $\mathfrak{X}$ degenerates at $E_1$. 

In general, the degeneration of the spectral sequence \ref{5.0.2} is an interesting but hard question. In \cite{SGA7-II}, Deligne proved the case of relative complete intersections.

Let $\mathcal{E}$ be a locally free sheaf of rank $r+1$ over a scheme $S$, let $p: \mathbb{P}(\mathcal{E})\rightarrow S$ be the relative projective bundle, and let $(a_1, \ldots, a_d)$ be a $d$-tuple of positive integers. One can define a relative complete intersection $Y$ in $\mathbb{P}(\mathcal{E})$ of multi-degree $(a_1, \ldots, a_d)$. 

\begin{theorem}\cite[Expos\'e XI, Theorem 1.5]{SGA7-II}\label{thm5.1}
Let $f: Y\rightarrow S$ be the relative complete intersection of relative dimension $n-d$ as above, and let $\eta\in H^0(S, R^1f_*\Omega^1_{Y/S})$ be the canonical section induced by the first Chern class of $\OO(1)$. Assume $f: Y\rightarrow S$ is smooth, then we have
\begin{enumerate}
\item The coherent sheaf $R^qf_*\Omega^p_{Y/S}$ is locally free, and any base change map is isomorphic.
\item The associated relative Hodge-de Rham spectral sequence degenerates at $E_1$.
\item  a) $R^qf_*\Omega^p_{Y/S}=0,$~ if $p\neq q$ and $p+q\neq n$;

b) If $2p< n$, then the global section $\eta^i: \OO_S\rightarrow R^pf_*\Omega^p_{Y/S}$ induced by $\eta^i$ is an isomorphism; 

c) If $n<2p\leq 2n$, let the section $\xi_p\in H^0(S, R^pf_*\Omega^p_{Y/S})$ dual to the section $\eta^{n-p}\in H^0(S, R^{n-p}f_*\Omega^{n-p}_{Y/S})$, then $\eta^p=(\displaystyle{\prod_{p=1}^d a_p) \xi_p}$;

d) If $n=2m$, then $\eta^m\neq 0$ and the quotient $R^mf_*\Omega^m_{Y/S}/\OO_S\cdot \eta^m$ is locally free.
\end{enumerate}
\end{theorem}

\begin{remark}
The assertions $(1)$ and $(2)$ are deduced from $(3)$ 
\end{remark}
In this section, the main goal is to prove the relative Hodge-de Rham spectral sequence of a smooth family of cyclic coverings of projective spaces degenerates at $E_1$. We will prove statements analogous to the ones in Theorem \ref{thm5.1} for family of cyclic coverings, see Theorem \ref{thm5.4}.

\begin{definition}\label{def5.3}
Let $p: \PP^n_S\rightarrow S$ be a projective bundle over a scheme $S$. Let $\mathcal{L}$ be a positive line bundle $\OO_{\PP^n_S}(r)$. Let $s\in \Hom(\OO_{\PP^n_S}, \mathcal{L}^k)$ be a generic section for some positive integer $k$ such that the zero locus $D:= Z(s)$ is smooth family of hypersurfaces over $S$. As in Definition \ref{def2.1}, the $\OO_{\PP^n_S}$-algebra 
\[
\mathcal{A}=(\bigoplus\limits_{i=0}^{k-1}\mathcal{L}^{-i}).
\]
defined by the section $s$ induces a relative affine morphism
\begin{equation}\label{5.3.1}
\xymatrix{
\mathcal{X}:=\underline{\Spec}(\mathcal{A}) \ar[rr]^-{f} \ar[rd]_{\pi}&&\PP^n_S \ar[ld]^{p}\\
&S,
}
\end{equation}
which is called the family of $k$-fold cyclic coverings of $\PP^n_S$ over $S$ branched along $D$ 

\end{definition}

The family $\pi: \mathcal{X}\rightarrow S$ is flat since $D$ is flat over $S$ and then $f$ is also flat. Through the rest of this section, we assume that $\pi$ is smooth. The smoothness ensures that the integer $k$ is not divided by the characteristic of the residue field $\kappa(s)$ for all $s\in S$. In particular, suppose that $S$ is the spectrum of a discrete valuation ring. If $k$ is coprime to the characteristics of the residue field and the fraction field, then the family of $k$-fold cyclic coverings is smooth. If $S$ is an $F$-scheme where $k$ is less than the characteristic of $F$, the family is also smooth.

\begin{lemma}\label{lemma5.4}
With the same notations in Definition \ref{def5.3}. We have that
\begin{align}
R^if_*\Omega^j_{\mathcal{X}/S}&=0, ~i\neq 0; \label{5.5.1} \\
f_*\Omega^j_{\mathcal{X}/S}&=\Omega^j_{\PP^n_S/S}\oplus \bigoplus\limits_{\mu=1}^{k-1}\Omega^j_{\PP^n_S/S}(\log D)\otimes \LL^{-\mu}; \label{5.5.2}\\
R^i\pi_*\Omega^j_{\mathcal{X}/S}
&=R^ip_*(\Omega^j_{\PP^n_S/S}\oplus \bigoplus\limits_{\mu=1}^{k-1}\Omega^j_{\PP^n_S/S}(\log D)\otimes \LL^{-\mu}).\label{5.4.3}
\end{align}
\end{lemma}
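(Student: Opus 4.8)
The plan is to deduce all three formulas in Lemma~\ref{lemma5.4} from the description of $\mathcal{X}$ as a divisor in a projective bundle, exactly mirroring the absolute picture from Section~\ref{sectionfcc}. First I would set up the relative version of the diagram \eqref{linebundleL}: let $\EE = \OO_{\PP^n_S}\oplus \LL^{-1}$, form $\hat{L} = \PP(\EE)$ with projection $\varpi : \hat{L}\to\PP^n_S$, and realize $\mathcal{X}\hookrightarrow\hat{L}$ as the zero locus of a section of $\varpi^*\LL^k\otimes\OO_{\PP(\EE)}(k)$, so that (fiberwise, hence by flatness globally) $g : \mathcal{X}\hookrightarrow\hat{L}$ is a smooth divisor and $f = \varpi\circ g$ is finite of degree $k$. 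The key sheaf-theoretic input is the Esnault--Viehweg decomposition \eqref{4.1.2}, whose proof in \cite[Lemma~3.16]{EV92} is local on the base and therefore yields \eqref{5.5.2} verbatim in the relative setting; here one needs that $D$ is a relative normal crossings divisor (in fact relatively smooth) over $S$, which holds by the smoothness assumption on $\pi$ together with the hypothesis that each $D_t$ is smooth, so $\Omega^j_{\PP^n_S/S}(\log D)$ is defined.

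Next I would prove the vanishing \eqref{5.5.1}. Since $f$ is finite (it is affine and proper, being the restriction of $\varpi$ to the proper-over-$S$ scheme $\mathcal{X}$), it is in particular an affine morphism, so $R^if_*\mathcal{F} = 0$ for every quasi-coherent $\mathcal{F}$ and every $i\neq 0$; applying this to $\mathcal{F} = \Omega^j_{\mathcal{X}/S}$ gives \eqref{5.5.1} immediately. Then the Leray spectral sequence for $\pi = p\circ f$ together with \eqref{5.5.1} collapses to the edge isomorphism $R^i\pi_*\Omega^j_{\mathcal{X}/S} \cong R^ip_*(f_*\Omega^j_{\mathcal{X}/S})$, and substituting \eqref{5.5.2} for $f_*\Omega^j_{\mathcal{X}/S}$ produces \eqref{5.4.3}. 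Thus \eqref{5.5.1} and \eqref{5.4.3} are formal consequences once \eqref{5.5.2} is in hand.

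The only real content is therefore establishing \eqref{5.5.2}, and the main obstacle is checking that the Esnault--Viehweg argument is genuinely base-independent: one must verify that the cyclic-covering construction commutes with the base change to fibers, that the local computation of $f_*\Omega^j_{\mathcal{X}}$ in terms of $\Omega^j$ and $\Omega^j(\log D)$ uses only the algebra structure $\mathcal{A} = \bigoplus_{i=0}^{k-1}\LL^{-i}$ and the local equation $\omega^k = \Phi$ (which it does), and that the eigenspace decomposition under the $\mu_k$-action splits $f_*\Omega^j_{\mathcal{X}/S}$ into the stated summands. I would handle this either by citing that the proof of \cite[Lemma~3.16]{EV92} works over any base with the stated hypotheses, or by reproducing it briefly: compute $f_*\OO_{\mathcal{X}}$, use the conormal sequence for $\mathcal{X}\hookrightarrow\hat{L}$ and Proposition~\ref{prop2.2}(i) in its relative form to identify $\Omega^1_{\mathcal{X}/S}$, and then pass to exterior powers and take $\mu_k$-invariants and eigenspaces. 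Everything else — finiteness of $f$, the Leray collapse, the edge isomorphism — is routine.
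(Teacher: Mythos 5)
Your proposal follows essentially the same route as the paper: finiteness (hence affineness) of $f$ gives the vanishing of $R^if_*$ for $i\neq 0$, the Leray spectral sequence for $\pi=p\circ f$ then collapses to give the third formula, and the real content is the relative version of the Esnault--Viehweg decomposition, which the paper likewise establishes by observing that the argument of \cite[Lemma 3.16]{EV92} is local and carries over (it writes out the relative Hurwitz formula $f^*\Omega^j_{\PP^n_S/S}(\log D)=\Omega^j_{U/S}(\log B)$ and checks pole orders along $B$ to identify $f_*\Omega^j_{\mathcal{X}/S}$ inside $f_*\Omega^j_{U/S}(\log B)=\bigoplus_i\Omega^j(\log D)\otimes\LL^{-i}$, rather than phrasing it via $\mu_k$-eigenspaces, but this is the same computation). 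The proposal is correct.
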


Let $S$ be a scheme,$X$ a smooth $S$-scheme, and $D$ a closed subscheme of $X$. Let $j: U:=X\slash D\hookrightarrow X$ be the open immersion. The \textit{logarithmic de Rham complex}:
\[
\Omega^{\bullet}_{X/S}(\log D)\
\]
can be defined as  a subcomplex of $j_*\Omega_{U/S}^{\bullet}$ if $D$ is a divisor with normal crossing relative to $S$. Here a normal crossing divisor relative to $S$ means locally for the \'etale topology, the pair $(X, D)$ is induced by the standard affine space $\mathbb{A}^n_S=\Spec S[t_1,\ldots, t_n]$ and the divisor $V(t_1\cdots t_r)$. For more details, see \cite[p. 137]{Hodge}.

\begin{proof}[Proof of Lemma 5.4]
The absolute version of the above decompositions has been proved, see \cite[Lemma 3.16 d)]{EV92}. For the sake of completeness, we show that the proof could be carried out in the relative version.

The morphism $f$ is finite. Hence, the first assertion follows. Let $\mathbb{A}^n_S$ be an affine open subset of $\PP^n_S$, and let $U\subset \XX$ be the inverse image $f^{-1}(\mathbb{A}^n_S)$. Denote by $s'$ the local defining equation of the branched locus $D\cap \mathbb{A}^n_S$ on $\mathbb{A}^n_S$. Let the tuple $\{s', x_1,\cdots , x_{n-1}\}$ be a local coordinate system of $\mathbb{A}^n_S$, which induces a basis $\{ ds',dx_1,\cdots, dx_{n-1}\}$ of the sheaf of $1$-forms $\Omega^1_{\mathbb{A}^n_S/S}$. Then the sheaf of $\OO_{\mathbb{A}^n_S}$-module $\Omega^1_{\mathbb{A}^n_S/S}(\log D)$ is locally free of finite type with a basis $\{\frac{ds'}{s'}, dx_1,\cdots, dx_{n-1}\}$. Similarly, we have a local coordinate system $\{t', f^*x_1,\cdots, f^*x_{n-1}\}$ on $U$, where $t'$ is the restriction of the tautological section $t\in H^0(\XX, f^*\LL)$ to $U$. Denote by $B$ the zero locus of the section $t'$ in $U$. Then the associated sheaf of $\OO_U$-module $\Omega^1_{U/S}(\log B)$ is locally free of finite type with a basis $\{\frac{dt'}{t'}, f^*dx_1,\cdots, f^*dx_{n-1}\}$. Firstly, we claim the following relative Hurwitz's formula
\[
f^*\Omega^j_{\mathbb{A}^n_S/S}(\log D)=\Omega^j_{U/S}(\log B).
\]
In fact, recall the Definition \ref{def5.3} that we have an equation $t'^k-f^*(s')=0$ on $U$, cf. (\ref{eqforvl}), which implies $f^*\frac{ds'}{s'}=\frac{dt'^k}{t'^k}= k\cdot\frac{dt'}{t'}$. The integer $k$ can be inverted in the ring $\Gamma(S, \OO_S)$ since $k$ is not divided by the characteristic of the residue field of any point of $S$. Therefore, the differential form $f^*\frac{ds'}{s'}$ and sheaf $f^*\Omega^1_{\mathbb{A}^n_S/S}$ generate $\Omega^1_{U/S}(\log B)$. Then the entire relative Hurwitz's formula follows from the exterior products. 

Following the above assertion, we have a natural inclusion
\[
f_*\Omega^j_{U/S}\subset f_*\Omega^j_{U/S}(\log B)=\Omega^j_{\mathbb{A}^n_S/S}(\log D)\otimes f_*\OO_U=\bigoplus\limits^{k-1}_{i=0}\Omega^j_{\mathbb{A}^n_S/S}(\log D)\otimes \LL^{-i}.
\]
We claim that the subsheaf 
\[
\Omega^j_{\mathbb{A}^n_S/S}\oplus \bigoplus\limits^{k-1}_{i=1}\Omega^j_{\mathbb{A}^n_S/S}(\log D)\otimes \LL^{-i}\subseteq \bigoplus\limits^{k-1}_{i=0}\Omega^j_{\mathbb{A}^n_S/S}(\log D)\otimes \LL^{-i}
\]
is equal to $f_*\Omega^j_{U/S}$. In fact, a local section $\sigma$ of $\Omega^j_{\mathbb{A}^n_S/S}(\log D)\otimes \LL^{-i}$ can be written as $\sigma=\psi\cdot s'^i$ for some local section $\psi$ of $\Omega^j_{\mathbb{A}^n_S/S}(\log D)$ and the local generator $s'$ of the bundle $\LL^{-1}$. The section $\psi$ has the form
\[
\omega\wedge \frac{ds'}{s'} \text{~or~} \omega,
\] 
where the local section $\omega$ has no pole along $D$. Therefore, the pullback of the section $\sigma$ is given by
\begin{align*}
f^*\sigma&=k\cdot f^*\omega\wedge \frac{dt'}{t'}\cdot f^*s'^i=k\cdot f^*\omega \wedge \frac{dt'}{t'}\cdot t'^i \text{~or~} \\
f^*\sigma&=f^*\omega\cdot f^*s'^i=f^*\omega\cdot t'^i.
\end{align*}
Note that $\sigma$ lies in $f_*\Omega^j_{U/S}$ if and only if the differential form $f^*\sigma$ has no pole along the divisor $B$. Therefore, the local section $\sigma$ lies in $f_*\Omega^j_{U/S}$ if and only if $i\geq 1$ or $i=0, \psi=\omega$, i.e. $\sigma$ is a local section of $\Omega^j_{\mathbb{A}^n_S/S}\oplus \bigoplus\limits^{k-1}_{i=1}\Omega^j_{\mathbb{A}^n_S/S}(\log D)\otimes \LL^{-i}$. We obatin the second formula.

Using the diagram (\ref{5.3.1}), we obtain the Leray spectral sequence 
\begin{equation}\label{5.4.5}
E^{a,b}_2=R^ap_*R^bf_*\Omega^j_{\mathcal{X}/S}\Longrightarrow R^i\pi_*\Omega^j_{\mathcal{X}/S} ,~ a+b=i. 
\end{equation}
By the first assertion, we have $E^{a,b}_2=0$ unless $b=0$. Therefore, the spectral sequence (\ref{5.4.5}) degenerates and it follows from the second assertion that

\[R^i\pi_*\Omega^j_{\mathcal{X}/S}=E^{i,0}_\infty=E^{i,0}_2
=R^ip_*(\Omega^j_{\PP^n_S/S}\oplus \bigoplus\limits_{\mu=1}^{k-1}\Omega^j_{\PP^n_S/S}(\log D)\otimes \LL^{-\mu}).\]

\end{proof}

\begin{proposition}\label{prop5.5}
Let $\pi: \mathcal{X}\rightarrow S$ be the smooth family of $k$-fold cyclic coverings associated to the line bundle $\LL$ defined in Definition \ref{def5.3}. Then we have
\[
R^i\pi_*(\Omega^j_{\mathcal{X}/S}\otimes f^*\LL^{-m})=0,~i+j<n, m\geq 1. 
\]

\end{proposition}

\begin{proof}
%If the base field $F$ has characteristic zero, the proposition is an simple application of Kodaira-Akizuki-Nakano vanishing theorem. For positive characteristic, we prove it by computing the related logarithmic de Rham complex.

The Leray spectral sequence argument as (\ref{5.4.5}) shows
\[
R^i\pi_*(\Omega^j_{X/S}\otimes f^*\LL^{-m})=R^ip_*(\Omega^j_{\PP^n_S/S}\otimes\LL^{-m}\oplus \bigoplus\limits_{\mu=1}^{k-1}\Omega^j_{\PP^n_S/S}(\log D)\otimes \LL^{-\mu}\otimes \LL^{-m}).
\]

By Bott's vanishing theorem,  we have $R^ip_*(\Omega^j_{\PP^n_S/S}\otimes\LL^{-m})=0$.. Therefore, it suffices to prove 
\begin{equation}\label{eqeqzero}
R^ip_*(\Omega^j_{\PP^n_S/S}(\log D)\otimes  \OO_{\PP^n_S}(r))=0 \text{~for~} i+j<n \text{~and~} r<0.
\end{equation}
Denote by $\mA$ the line bundle $\OO_{\PP^n_S}(r)$. Consider the short exact sequence of residue map on logarithmic differential forms
\begin{equation}\label{5.5.3}
0\rightarrow \Omega^j_{\PP^n_S/S}\rightarrow \Omega^j_{\PP^n_S/S}(\log D)\xrightarrow{res} \iota_*\Omega^{j-1}_{D/S}\rightarrow 0
\end{equation} where $\iota: D\hookrightarrow \PP^n_S$ is the natural inclusion. In order to prove (\ref{eqeqzero}), it suffices to show \begin{align*}
R^ip_*(\Omega^j_{\PP^n_S/S}\otimes \mA) &=0 \text{~for~} i+j<n,\\
\text{and}~ R^qp_* (\iota_*\Omega^p_{D/S}\otimes \mA)&=0 \text{~for~} q+p < \dim D.
\end{align*}
The first conclusion follows again by the Bott's vanishing theorem since the invertible sheaf $\mA$ is negative. We prove the second one in the following.

Let $d$ be the degree of the smooth divisor $D$. Consider a natural resolution
\[
0\rightarrow \OO_D(-p\cdot d)\rightarrow \Omega^1_{\PP^n_S/S}(-(p-1)\cdot d)|_{D}\rightarrow \cdots \rightarrow \Omega^{p}_{\PP^n_S/S}|_{D}\rightarrow \Omega^p_{D/S}\rightarrow 0.
\]
of the sheaf of differential forms $\Omega^p_{D/S}$. Tensoring the resolution with $\iota^*\mA$, we obtain a complex $\mathcal{K}^{\bullet}$ whose $a$-th term is $(\Omega^{a}_{\PP^n_S/S}(-(p-a)\cdot d)\otimes \mA)|_D.$
Then the hypercohomology spectral sequence for the complex $\mathcal{K}^{\bullet}$ is
\[
E^{a,b}_1:=R^bp_*(\Omega^{a}_{\PP^n_S/S}(-(p-a)\cdot d)\otimes \mA|_{D})\Rightarrow \mathbb{R}^{a+b}p_*(\mathcal{K}^{\bullet})=R^{a+b-p}p_*(\Omega^p_{D/S}\otimes \mA|_D))
\]
We claim that \begin{equation}\label{5.6.6}
R^bp_*(\Omega^a_{\PP^n_S/S}(-(p-a)\cdot d)\otimes \mA|_D)=0 \text{~for~} a+b < \dim D.
\end{equation}

For simplicity, denote by $\mathscr{L}$ the negative line bundle $\OO_{\PP^n_S/S}(-(p-a)\cdot d)\otimes \mA$, and consider the short exact sequence
\[
0\rightarrow \Omega^a_{\PP^n_S/S}(-d)\otimes \mathscr{L}\rightarrow \Omega^a_{\PP^n_S/S}\otimes \mathscr{L}\rightarrow \Omega^a_{\PP^n_S/S}\otimes \mathscr{L}|_D\rightarrow 0.
\]
It is easily to see the claim (\ref{5.6.6}) follows from the Bott's vanishing
\begin{align*}
R^bp_*(\Omega^a_{\PP^n_S/S}\otimes \mathscr{L})&=0 \text{~for~} a+b<n-1\\
R^{b+1}p_*(\Omega^a_{\PP^n_S/S}(-d)\otimes \mathscr{L})&=0 \text{~for~} a+b<n-1.
\end{align*}

\end{proof}

\begin{theorem}\label{thm5.4}
Recall that 
\[
\xymatrix{
\mathcal{X} \ar[rr]^-{f} \ar[rd]_{\pi}&&\PP^n_S \ar[ld]^{p}\\
&S,
}
\]
is a smooth family of $k$-fold cyclic coverings of $\PP^n_S$ branched along $D$. Let $\eta\in H^0(S, R^1p_*\Omega^1_{\PP^n_S/S})$ be global section induced by the first Chern class of $\OO(1)$ on $\PP^n_S/S$, and let $f^*\eta\in H^0(S, R^1\pi_*\Omega^1_{\XX/S})$ be global section induced by the first Chern class of $f^*\OO(1)$. We have
\begin{enumerate}
\item $R^i\pi_*\Omega^j_{\mathcal{X}/S}=0,$~ if $i\neq j$ and $i+j\neq n$,
\item if $2i\neq n$, the global section $f^*\eta^i:\OO_S\rightarrow R^i\pi_*\Omega^i_{\mathcal{X}/S}$ is an isomorphism.
\item the coherent sheaves $R^i\pi_*\Omega^j_{\mathcal{X}/S}$ are locally free for any $i, j$, and any base change map is isomorphic,
\item the relative Hodge-de Rham spectral sequence
\begin{equation}
E^{j,i}_1=R^i\pi_*\Omega^j_{\mathcal{X}/S}\Longrightarrow \mathbb{R}^{i+j}\pi_*(\Omega_{\mathcal{X}/S}^\bullet)
\end{equation}
degenerates at the level $E_1$, and any base change map of the spectral sequences are isomorphic.
\end{enumerate}
\end{theorem}

\begin{proof}
%The strategy of the proof is similar to the proof of  \cite[Expos\'e XI, Theorem 1.5]{SGA7-II}. 
Recall Lemma \ref{lemma5.4} that
\[
R^i\pi_*\Omega^j_{\mathcal{X}/S}= R^ip_*(\Omega^j_{\PP^n_S/S}\oplus \bigoplus\limits_{\mu=1}^{k-1}\Omega^j_{\PP^n_S/S}(\log D)\otimes \LL^{-\mu}).
\]
It follows from the claim (\ref{eqeqzero}) in Proposition \ref{prop5.5} that 
\[
R^ip_*(\Omega^j_{\PP^n_S/S}(\log D)\otimes \LL^{-\mu})=0, \text{~for~} i+j\leq n-1.
\] 
Therefore, $f^*: R^ip_*\Omega^j_{\PP^n_S/S} \rightarrow R^i\pi_*\Omega^j_{\mathcal{X}/S}$ is an isomorphism for $i+j\leq n-1$. Hence the first two assertions holds for $i+j\leq n-1$. By Serre duality(\cite[III 12, p. 211]{Har66} ), there is a canonical isomorphism
\begin{align*}
\wp: R^i\pi_*\Omega^j_{\XX/S}\rightarrow \mathcal{H}om(R^{n-i}\pi_*\Omega^{n-j}_{\XX/S}, \OO_S)
\end{align*} 
Therefore, the first two assertions extend to the cases of $i+j\geq n+1$.
%via the trace map $Tr_1$ shows that $R^i\pi_*\Omega^j_{\XX/S}=0$ if $i\neq j$. We claim that the class $f^*\eta^i$ of the coherent sheaf $R^i\pi_*\Omega^i_{\XX/S}$ is the dual class of $f^*\eta^{n-i}$ via the pairing $\wp$ for $2i > n$. It is suffices to prove that $\wp(f^*\eta^i,f^*\eta^{n-i} ) =Tr_1(f^*\eta^n)=1$.

%\begin{align*}
%R^n\pi_*\Omega^n_{\mathcal{X}/S}\simeq R^np_*(\Omega^n_{\PP^n_S/S}\oplus \bigoplus\limits_{\mu=1}^{k-1}\Omega^n_{\PP^n_S/S}(\log D)\otimes \LL^{-\mu} ), \\
%\text{and~} R^np_*(\bigoplus\limits_{\mu=1}^{k-1}\Omega^n_{\PP^n_S/S}(\log D)\otimes \LL^{-\mu} )\simeq \bigoplus_{\mu=1}^{k-1}R^np_*(\Omega^n_{\PP^n_S/S}\otimes\LL^{\mu})=0
%\end{align*}
%since $\Omega^n_{\PP^n_S/S}(\log D)=\Omega^n_{\PP^n_S/S}\otimes \OO_{\PP^n_S/S}(D)$. 
%Then it follows from Theorem \ref{thm5.1} that
%\[\bigoplus_{\mu=0}^{k-1}R^np_*(\Omega^n_{\PP^n_S/S}\otimes\LL^{\mu})=R^np_*\Omega^n_{\PP^n_S/S}.\]
%Therefore, the pullback homomorphism \[f^*: R^np_*\Omega^n_{\PP^n_S/S}\rightarrow R^n\pi_*\Omega^n_{\XX/S}, \] indeed induces an isomorphism. Suppose that  $Tr_2: R^np_*\Omega^n_{\PP^n_S/S}\rightarrow \OO_S$ is the trace map of the projective space $\PP^n_S$. We have $Tr_1\circ f^*=Tr_2$. Therefore, it gives rise to the identities$$Tr_1(f^*\eta^n)=Tr_2(\eta^n)=Tr_2(c_1(\OO_{\PP(\EE)}(n)))=1.$$
%Therefore,the invertible sheaf $R^i\pi_*\Omega^i_{\mathcal{X}/S}$ is generated by $f^*\eta^i$ for $2i>n$.

The third assertion is deduced from $(1)$ and $(2)$. In fact, the function of Euler characteristic of $s\mapsto \chi(\Omega^j_{X_s})$ is locally constant on $S$. Notice that $R^i\pi_*\Omega^j_{\mathcal{X}/S}$ are locally free for all $i+j\neq n$. Hence, for a fixed integer $j$, the upper semi-continuous function
\[
s\mapsto \dim H^{n-j}(\XX_s, \Omega^j_{\XX_s})
\]
is locally constant on the $S$. So the coherent sheaf $R^{i}\pi_*\Omega^j_{\XX/S}$ is locally free for $i=n-j$. In addition, the locally freeness implies the base change maps are isomorphisms.

To prove $(4)$, we mimic Deligne's argument to prove the degenerate of the relative Hodge-de Rham spectral sequence of complete intersections in projective spaces. To be precise, one can construct a scheme $\tilde{S}$ which is smooth and of finite type over $\Spec \mathbb{Z}$ and a smooth family of $k$-fold cyclic coverings $\tilde{f}: \tilde{\XX}\rightarrow \tilde{S}$ satisfying the cartesian diagram

\begin{equation}\label{basechange}
\xymatrix{
\mathcal{X}\ar[r] \ar[d]_\pi&\tilde{\XX} \ar[d]_{\tilde{\pi}}\\
S \ar[r]& \tilde{S}.
}
\end{equation}
The construction of the scheme $\tilde{S}$ can be found in \cite[6, p. 130]{Hodge}. Let $\eta$ be the generic point of $\tilde{S}$. The characteristic of the fraction field $K(\eta)$ is zero. Thus the induced Hodge-de Rham spectral sequence
\begin{equation}\label{degen}
E^{j,i}_{1,\eta}:=R^i\tilde{\pi}_*\Omega^j_{\tilde{X}_\eta}\Longrightarrow R^{i+j}\tilde{\pi}_*\Omega^{\bullet}_{\tilde{\XX}_\eta}
\end{equation} degenerates at the level $\tilde{E}_1$. Let
\[
d^{j,i}_1: R^i\tilde{\pi}_*\Omega^j_{\tilde{X}_S}\rightarrow  R^i\tilde{\pi}_*\Omega^{j+1}_{\tilde{X}_S}
\]
be the first differential map of the spectral sequence. Then $d^{j,i}_1$ vanishes on $\tilde{S}$ because it is zero on the generic point. Therefore, the relative Hodge-de Rham spectral sequence degenerates at $E_1$.

\end{proof}

\subsection{The Infinitesimal Torelli Theorem}

In section 3, we have the infinitesimal Torelli theorem of a cyclic covering $X$ of $\PP^n_{\mathbb{C}}$, see Theorem \ref{thm3.5}. In fact, this conclusion can be generalized to $X$ over an arbitrary field. The method to prove Theorem \ref{thm3.5} is to verify Flenner's criterion of the infinitesimal Torelli theorem \cite[Theorem 1.1]{Fle86}. The following theorem shows this criterion turns out to be algebraic.

\begin{theorem}\cite[Appendix A]{Pan15}\label{thm5.8}
Let X be a smooth proper scheme of dimension $n$ over a field $K$. Assume the existence of a resolution of $\Omega_{X/K}^1$ by locally free sheaves\[0\rightarrow \mathcal{G}\rightarrow \mathcal{F}\rightarrow\Omega_{X/K}^1\rightarrow 0.\] Let $D_r\mathcal{G}$ be the divided power $\Sym^r(\mathcal{G^{\vee}})^{\vee}$, and let $\kappa_X$ be the canonical sheaf of $X$. If following two conditions:
\begin{enumerate}
\item $H^{j+1}(X, \Sym^{j}\mathcal{G}\otimes \Lambda^{n-j-1}\mathcal{F}\otimes \kappa_X^{-1})=0$ for $0\leq j\leq n-2$;
\item the pairing \[H^0(X, D_{n-p}(\mathcal{G}^{\vee})\otimes \kappa_X)\otimes H^0(X, D_{p-1}(\mathcal{G}^{\vee})\otimes \kappa_X)\rightarrow H^0(X, D_{n-1}(\mathcal{G}^{\vee})\otimes \kappa_X^2)\] is surjective for a positive integer $p$ no larger than $n$
\end{enumerate}
are satisfied, then the cup product map
\begin{equation} \label{cupproduct}
\lambda_p: H^1(X, \Theta_X)\rightarrow \Hom(H^{n-p}(X, \Omega_{X/K}^p), H^{n+1-p}(X,\Omega_{X/K}^{p-1}))
\end{equation} is injective.
\end{theorem}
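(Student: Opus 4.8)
The plan is to reduce the injectivity of $\lambda_p$ to a statement in commutative algebra — the surjectivity of a multiplication map onto a one-dimensional ``socle'' degree — and then to deduce that statement from hypothesis (2) by a Macaulay--Gorenstein duality argument. This is the characteristic-free analogue of Flenner's original argument over $\CC$; the one point of care is that wherever a symmetric power would be used over $\CC$ one must pass to its dual divided power $D_r$, so that Serre duality remains exact in characteristic $p$.

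First I would turn the given resolution $0\to\mathcal{G}\to\mathcal{F}\to\Omega^1_{X/K}\to 0$ into a finite locally free resolution of $\Omega^p_{X/K}$ whose terms are the bundles $\Sym^j\mathcal{G}\otimes\Lambda^{\bullet}\mathcal{F}$ — such a resolution exists over any base, via the characteristic-free Koszul/Schur complexes attached to a subbundle inclusion (Akin--Buchsbaum--Weyman). Breaking this resolution into short exact sequences and taking cohomology, the vanishing in hypothesis (1), $H^{j+1}(X,\Sym^j\mathcal{G}\otimes\Lambda^{n-j-1}\mathcal{F}\otimes\kappa_X^{-1})=0$ for $0\le j\le n-2$, is exactly what forces the hypercohomology spectral sequence computing $H^{\ast}(X,\Omega^p_{X/K})$ to collapse onto the predictable term. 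Dualizing via relative Serre duality on $X$ — using $(\Omega^p_{X/K})^{\vee}\cong\Omega^{n-p}_{X/K}\otimes\kappa_X^{-1}$ together with $(\Sym^j\mathcal{G})^{\vee}\cong D_j(\mathcal{G}^{\vee})$ — then re-expresses $H^{n-p}(X,\Omega^p_{X/K})$, $H^{n+1-p}(X,\Omega^{p-1}_{X/K})$ and $H^1(X,\Theta_X)$ in terms of the graded pieces of a graded module $M_{\bullet}=\bigoplus_r H^0(X,D_r(\mathcal{G}^{\vee})\otimes\kappa_X)$ over the divided-power ring $S_{\bullet}=\bigoplus_r H^0(X,D_r(\mathcal{G}^{\vee}))$, with $H^1(X,\Theta_X)$ corresponding to $S_1=H^0(X,\mathcal{G}^{\vee})$ through the connecting map of the dualized sequence $0\to\Theta_X\to\mathcal{F}^{\vee}\to\mathcal{G}^{\vee}\to 0$.

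Next, unwinding the definition of the cup product on the level of \v{C}ech cocycles (equivalently of the Yoneda/hyperext pairing) and inserting the identifications just made, $\lambda_p$ becomes the multiplication
\[
S_1\ \longrightarrow\ \Hom\!\bigl(H^0(X,D_{n-p}(\mathcal{G}^{\vee})\otimes\kappa_X),\,H^0(X,D_{n-p+1}(\mathcal{G}^{\vee})\otimes\kappa_X)\bigr)
\]
induced by the divided-power module structure $\mathcal{G}^{\vee}\otimes D_r(\mathcal{G}^{\vee})\to D_{r+1}(\mathcal{G}^{\vee})$; matching the two descriptions is routine but is where essentially all of the bookkeeping lies. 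Finally I would run the duality: taking $\Lambda^n$ of the original sequence gives $\kappa_X\cong\det\mathcal{F}\otimes(\det\mathcal{G})^{\vee}$, and combining this with Serre duality (hypothesis (1) again supplying the requisite vanishing) equips $M_{\bullet}$ with a nondegenerate pairing landing in $H^0(X,D_{n-1}(\mathcal{G}^{\vee})\otimes\kappa_X^2)$, which plays the role of the socle-degree piece in the standard Macaulay argument. Granting hypothesis (2), namely that $H^0(X,D_{n-p}(\mathcal{G}^{\vee})\otimes\kappa_X)\otimes H^0(X,D_{p-1}(\mathcal{G}^{\vee})\otimes\kappa_X)\to H^0(X,D_{n-1}(\mathcal{G}^{\vee})\otimes\kappa_X^2)$ is surjective, an element $\xi\in S_1$ annihilating the source of $\lambda_p$ would, after multiplication against $H^0(X,D_{p-1}(\mathcal{G}^{\vee})\otimes\kappa_X)$, annihilate the whole socle, hence be zero by nondegeneracy of the pairing; translated back through the previous step this is exactly $\ker\lambda_p=0$.

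The main obstacle is the first step: making this entire chain genuinely characteristic-free. Over $\CC$ one uses symmetric powers and their self-duality without a second thought, but in characteristic $p$ one must (i) know that the Koszul/Schur complexes resolving $\Omega^p_{X/K}$ stay exact, (ii) carry the divided-power functors $D_j$ rather than $\Sym^j$ through every dualization so that Serre duality does not break, and (iii) check that no denominators creep into the identification of $\lambda_p$ with the multiplication map. Once that functorial framework is in place — this is the content of \cite[Appendix A]{Pan15} — the Gorenstein-duality conclusion in the last step is formal and characteristic-independent.
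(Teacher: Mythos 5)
The paper does not actually prove this theorem: it is quoted verbatim from \cite[Appendix A]{Pan15}, so there is no internal proof to compare against. Judged on its own terms, your outline captures the overall shape of Flenner's argument and the correct characteristic-$p$ precaution (carry divided powers $D_r$ rather than $\Sym^r$ through every dualization so that Serre duality survives), but it contains a structural error that would sink a complete write-up.

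The error is the claim that $H^1(X,\Theta_X)$ ``corresponds to $S_1=H^0(X,\mathcal{G}^\vee)$ through the connecting map of the dualized sequence $0\to\Theta_X\to\mathcal{F}^\vee\to\mathcal{G}^\vee\to 0$.'' Nothing in hypotheses (1)--(2) makes that connecting map surjective (that would require the restriction map $H^1(X,\Theta_X)\to H^1(X,\mathcal{F}^\vee)$ to vanish), so a Macaulay-type argument that starts from an element $\xi\in S_1$ controls only the image of the connecting map and says nothing about the rest of $H^1(X,\Theta_X)$; injectivity of $\lambda_p$ does not follow. Relatedly, you have assigned hypothesis (1) to collapsing the computation of $H^{\ast}(X,\Omega^p_{X/K})$, but the shape of the vanishing --- the twist by $\kappa_X^{-1}$ and the factors $\Lambda^{n-j-1}\mathcal{F}$ --- matches the Koszul-type resolution of $\Omega^{n-1}_{X/K}\otimes\kappa_X^{-1}\cong\Theta_X$, not that of $\Omega^p_{X/K}$. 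Its actual role is to make the composite of connecting maps
\[
H^1(X,\Theta_X)\longrightarrow H^n(X,\Sym^{n-1}\mathcal{G}\otimes\kappa_X^{-1})\cong H^0(X,D_{n-1}(\mathcal{G}^\vee)\otimes\kappa_X^{2})^{\vee}
\]
injective. One then checks that $\lambda_p$ factors as this injection followed by the transpose of the multiplication pairing of hypothesis (2), whose injectivity is exactly the assumed surjectivity. That reversal --- embedding all of $H^1(X,\Theta_X)$ into the dual of the top (socle) degree rather than presenting it as a quotient of $S_1$ --- is what makes the criterion work with no hypothesis on $\mathcal{F}^\vee$, and it is the step your sketch is missing.
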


%Therefore we obtain the result which is analogous to theorem \ref{thm3.5}.

\begin{theorem}\label{thm5.9}
Let X be a smooth $k$-fold cyclic covering of $\PP^n_{K}$ branched along the smooth divisor $D$ over a field $K$. Suppose that $n$ is at least $2$ and $k$ is prime to $\Char(K)$. Then the cup product (\ref{cupproduct}) is injective for $X$ with the only exceptions
\begin{itemize}
\item $X$ is a 3-fold covering of $\PP^2_K$ branched along a cubic curve;
\item $X$ is a 2-fold covering of $\PP^2_K$ branched along a quartic curve;
%\item $X$ is a 2-fold covering of $\PP^3_K$ branched along a quartic curve, $char(K)=2$.
\end{itemize} 
\end{theorem}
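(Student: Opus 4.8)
## Proof proposal for Theorem~\ref{thm5.9}

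The plan is to reduce to Flenner's criterion as formulated in Theorem~\ref{thm5.8}, exactly as Wehler does in characteristic zero, but carrying along all the cohomological input in the relative/positive-characteristic setting established in Section~5. First I would fix the explicit resolution of $\Omega^1_{X/K}$ coming from the embedding $g: X\hookrightarrow \hat L = \PP(\OO_{\PP^n}\oplus\LL^{-1})$: restricting the exact sequence $0\to N^\vee_{X/\hat L}\to g^*\Omega^1_{\hat L/K}\to \Omega^1_{X/K}\to 0$ and using Proposition~\ref{prop2.2} ($N_{X/\hat L}=f^*\LL^k$, $g^*\Omega^1_{\hat L/Z}=f^*\LL^{-1}$), together with the Euler sequence on $\PP^n$ pulled back to $X$, gives a two-term locally free resolution $0\to \mathcal G\to \mathcal F\to \Omega^1_{X/K}\to 0$ with $\mathcal F$ an extension involving $f^*\Omega^1_{\PP^n}$ and $f^*\LL^{-1}$ and $\mathcal G=f^*\LL^k$ (or the analogous bundle Wehler uses). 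The point is that this construction is purely algebraic and works verbatim over any field with $k$ prime to $\Char K$, since Proposition~\ref{prop2.2} does.

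Next I would translate the two hypotheses of Theorem~\ref{thm5.8} into vanishing/surjectivity statements about cohomology of line bundles and bundles of logarithmic differentials on $\PP^n$, via the decomposition $f_*\Omega^q_X = \Omega^q_{\PP^n}\oplus\bigoplus_{i=1}^{k-1}\Omega^q_{\PP^n}(\log D)\otimes\LL^{-i}$ (Proposition~\ref{prop4.1}, and its relative form Lemma~\ref{lemma5.4}) and the projection formula. Condition (1), an $H^{j+1}$-vanishing, becomes a statement about $H^{j+1}$ of symmetric powers of line bundles tensored with $\Omega$'s and $\kappa_X^{-1}=f^*(\kappa_{\PP^n}^{-1}\otimes\LL^{1-k})$; these are handled by Bott-type vanishing on $\PP^n$ together with the residue sequence (\ref{5.5.3}) for the logarithmic pieces, precisely the kind of computation done in Proposition~\ref{prop5.5}. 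Condition (2), the surjectivity of a multiplication pairing on spaces of global sections of $D_\bullet(\mathcal G^\vee)\otimes\kappa_X$, unwinds to surjectivity of a multiplication map between graded pieces of homogeneous coordinate rings of $\PP^n$ twisted by sections of $\OO(D)$ — i.e. a statement that a certain multiplication $H^0(\PP^n,\OO(a))\otimes H^0(\PP^n,\OO(b))\to H^0(\PP^n,\OO(a+b))$ (possibly with one factor replaced by a space of log-forms) is surjective, which holds for $a,b$ in the appropriate range and fails exactly in the two listed small cases. Since all of these are statements about explicit $\GL$-representations (spaces of polynomials), they are characteristic-independent: the dimensions and the surjectivity are governed by the combinatorics of monomials, not by $\Char K$, as long as $k$ is invertible.

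The main obstacle I expect is bookkeeping rather than a genuine new idea: one must check that \emph{none} of the vanishing statements feeding Flenner's criterion secretly uses Kodaira–Nakano vanishing (which can fail in positive characteristic) or a characteristic-zero splitting. The safe route is to use only (a) Serre vanishing / Bott vanishing for line bundles on $\PP^n$ over any field, (b) the residue exact sequence and the conormal/Euler sequences, which are characteristic-free, and (c) the degeneration result Theorem~\ref{thm5.4} if any de Rham comparison is needed — but in fact Theorem~\ref{thm5.8} is stated purely in terms of coherent cohomology, so one only needs the sheaf-theoretic inputs. A secondary subtlety is identifying, over a general field, exactly which choice of the integer $p$ makes condition (2) hold; here I would follow Wehler's choice (the one that works in characteristic zero) and verify that his surjectivity argument, being about multiplication of polynomials, survives reduction mod $p$. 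Concretely, I would: (i) write down $\mathcal G,\mathcal F$; (ii) reduce (1) and (2) to statements on $\PP^n$; (iii) prove those statements by Bott vanishing and elementary multiplication-of-polynomials arguments, isolating the two exceptional $(n,k,\deg D)$ cases; (iv) invoke Theorem~\ref{thm5.8} to conclude injectivity of $\lambda_p$. The exceptions are inherited from the failure of condition (2) in those cases, matching Theorem~\ref{thm3.5}.
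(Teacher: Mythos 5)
Your overall strategy --- apply Theorem \ref{thm5.8} to the conormal resolution $0\to f^*\LL^{-k}\to g^*\Omega^1_{\hat{L}}\to\Omega^1_{X/K}\to 0$ coming from the embedding $X\hookrightarrow\hat{L}$, push the two conditions down to $\PP^n$ via the logarithmic decomposition and the projection formula, and argue that Wehler's characteristic-zero verification carries over --- is exactly the paper's. (Two small points: your $\mathcal{G}$ should be the conormal bundle $f^*\LL^{-k}$, not $f^*\LL^{k}$, and the hypersurface case $\LL=\OO_{\PP^n}(1)$ is disposed of separately by citing the infinitesimal Torelli theorem for hypersurfaces in positive characteristic from \cite[Proposition A.9]{Pan15}.)

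The genuine gap is your claim that the cohomological input is ``governed by the combinatorics of monomials, not by $\Char K$.'' It is not: the paper isolates exactly two cases of Wehler's computation where the characteristic enters, and handling them is where the real content of the proof lies. For $(n,m,k)=(3,2,2)$, condition (1) of Theorem \ref{thm5.8} reduces to $H^1(X,g^*\Omega^2_{\hat{L}}\otimes\kappa_X^{-1})=0$, which via the filtration of $g^*\Omega^2_{\hat{L}}$ becomes the injectivity of a connecting map $H^1(\PP^3,\Omega^1_{\PP^3})\to H^2(\PP^3,\Omega^2_{\PP^3})$; this map is cup product with $c_1(\OO_{\PP^3}(\deg D))$, i.e.\ multiplication by $4$ on a one-dimensional space, which vanishes in characteristic $2$. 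The case is saved only because $k=2$ forces $\Char(K)\neq 2$. For $(n,m,k)=(2,2,3)$ one has $\deg D=6$ while only $\Char(K)\neq 3$ is guaranteed, so characteristic $2$ cannot be excluded and the analogous argument genuinely fails; the paper abandons Flenner's criterion for this case and instead invokes the criterion of \cite[Theorem $1'$]{LWP77} (whose proof it observes to be characteristic-free), verifying its hypothesis by showing $H^0(X,\Omega^1_X\otimes\OO_X(1))=0$ via the logarithmic decomposition and the vanishing (\ref{eqeqzero}). Your proposal asserts characteristic-independence precisely at the step where it breaks down, and does not anticipate that a different Torelli criterion is needed for one of the surviving cases; as written, steps (iii)--(iv) of your plan would not go through for $(2,2,3)$ in characteristic $2$.
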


\begin{proof}

If $X$ is a hypersurface of $\PP^{n+1}_K$ but not a cubic surface, the infinitesimal Torelli theorem of $X$ had been proved in \cite[Proposition A.9.]{Pan15}. Therefore, we may assume that $X$ is not a hypersurface. Recall the notation in (\ref{linebundleL}), $X$ is a smooth divisor of the $\PP^1$-bundle $\hat{L}$ with normal bundle $N_{X/\hat{L}}=f^*\LL^{k}$. We thus apply Theorem \ref{thm5.8} to the following natural resolution
\begin{equation}\label{5.9.2}
0\rightarrow f^*\LL^{-k}\rightarrow \Omega^1_{\hat{L}}|_X\rightarrow \Omega^1_{X/K}\rightarrow 0.
\end{equation}
The computations follow line by line of the computations over the field of complex numbers, see \cite[Theorem 4.8]{Weh86}. There two cases we need to take care because the computations involve the characteristics of the field. In the following, we show that these two cases are not exclusive. We set $\LL=\OO(m)$, where $m>0$.

$\bullet$ $(n,m,k)=(3,2,2)$. By the calculation in \cite[Theorem 4.8]{Weh86}, the condition (1) in Theorem \ref{thm5.8} is equivalent to
\[
H^1(X, \Omega^{2}_{\hat{L}}|_X\otimes \kappa^{-1}_X)=0.
\]
To compute the cohomology group $H^1(X, \Omega^{2}_{\hat{L}}|_X\otimes \kappa^{-1}_X)$, it uses the natural short exact sequence
\[
0\rightarrow f^*\Omega^2_{\PP^3}\rightarrow \Omega^2_{\hat{L}}|_X\rightarrow f^*(\Omega^1_{\PP^3}\otimes \mathcal{L}^{-1})\rightarrow 0.
\] induced by (\ref{5.9.2}).
Note that $\kappa_X=f^*(\kappa_{\PP^3}\otimes \LL)$ by Proposition \ref{prop2.2} (ii), it follows that
\begin{align*}
H^1(X, f^*(\Omega^1_{\PP^3}\otimes \mathcal{L}^{-1})\otimes \kappa^{-1}_X)&=H^1(\PP^3, \Omega^1_{\PP^3}\otimes\kappa^{-1}_{\PP^3}\otimes\LL^{-2}\otimes f_*\OO_X)\\
&=\bigoplus\limits^{1}_{r=0}H^1(\PP^3, \Omega^1_{\PP^3}(-2r))=H^1(\PP^3, \Omega^1_{\PP^3}),\\
\text{~and~} 
H^2(X, f^*\Omega^2_{\PP^3}\otimes \kappa^{-1}_X)&=H^2(\PP^3, \Omega^2_{\PP^3}(2)\otimes f_*\OO_X)\\
&=\bigoplus\limits^{1}_{r=0}H^2(\PP^3, \Omega^2_{\PP^3}(2-2r))=H^2(\PP^3, \Omega^2_{\PP^3}).
\end{align*}
Therefore, we obtain the exact sequence
\[
0\rightarrow H^1(X, g^*\Omega^2_{\hat{L}}\otimes \kappa^{-1}_X)\rightarrow H^1(\PP^3, \Omega^1_{\PP^3})\xrightarrow{\delta} H^2(\PP^3, \Omega^2_{\PP^3}).
\] 
The connecting morphism $\delta$ is the cup product with the frist Chern class $c_1(\mathscr{L}_D)$, see \cite[p. 470]{Weh86}. Note that the degree of $D$ is 4. The connecting morphism is injective if $\Char(K)\neq 2$. Recall that the smoothness of $X$ requires that $k$ is prime to $\Char(K)$. Therefore, $\Char(K)\neq 2$ in our case and we obtain $
H^1(X, g^*\Omega^{2}_{\hat{L}}\otimes \kappa^{-1}_X)=0.$\\
 
$\bullet$ $(n, m, k)=(2, 2, 3)$. In this case, the canonical sheaf $\kappa_X=f^*\OO_{\PP^2}(1)$ is ample. We refer to a theorem \cite[Theorem $1'$]{LWP77} characterizing the cup product 
\[
\lambda_2: H^1(X, \Theta_X)\rightarrow \Hom(H^{0}(X, \Omega_X^2), H^{1}(X,\Omega_X^1))
\] is injective. We note that the proof of \cite[Theorem $1'$]{LWP77} is algebraic and holds for any characteristic though the statement is for a complex compact K\"ahler manifold. Moreover, the first two assumptions in \cite[ Theorem 1]{LWP77} imply the hypothesis in \cite[Theorem $1'$]{LWP77}. Then in our case, it suffices to verify the second assumption of \cite[ Theorem 1]{LWP77}, which is equivalent to verify $H^0(X, \Omega^1_{X}\otimes \OO_{X}(1))=0$. By the projection formula and Lemma \ref{lemma5.4}, we have
\begin{align*}
H^0(X, \Omega^1_X\otimes \OO_X(1))&=H^0(\PP^2, \OO_{\PP^2}(1)\otimes f_*\Omega^1_{X})\\
&=H^0(\PP^2, \Omega^1_{\PP^2}(1))\oplus \bigoplus\limits_{i=1}^{2}H^0(\PP^2, \Omega^1_{\PP^2}(\log D)\otimes \OO_{\PP^2}(1)\otimes \LL^{-i})\\
&=\bigoplus\limits_{i=1}^{2}H^0(\PP^2, \Omega^1_{\PP^2}(\log D)\otimes \OO_{\PP^2}(1)\otimes \LL^{-i}).
\end{align*} 
Notice that the invertible sheaf $\OO_{\PP^2}(1)\otimes \LL^{-i}$ is negative. It follows from Proposition \ref{prop5.5}, cf. (\ref{eqeqzero}) that
\[
H^0(\PP^2, \Omega^1_{\PP^2}(\log D)\otimes \OO_{\PP^2}(1)\otimes \LL^{-i})=0,~ 1\leq i\leq 2.
\]
Therefore we checked that the infinitesimal Torelli theorem holds for these two cases.
\end{proof}

\section{Automorphisms in Positive Characteristic}
Let $K$ be an algebraically closed field of positive characteristic. We recall the main theorem of the paper \cite{Pan16}.

\begin{theorem}\cite[Theorem 1.7]{Pan16}\label{corolifting}
Let $\bar{X}$ be a smooth projective scheme over the Witt ring $W:=W(K)$ of $K$, and let $X$ be the special fiber over $\Spec(K)$. Assume that the relative Hodge-de Rham spectral sequences of $\bar{X}/W$ degenerates at $E_1$, and all the terms in the spectral sequence are locally free. Assume that $g_0$ is an automorphism of $X$ such that the map 
\[
\cris^n(g_0): \cris^n(X/W)\rightarrow \cris^n(X/W)
\] 
preserves the Hodge filtrations induced by the natural identification 
\[
\cris^n(X/W)\cong \Ho^n_{\DR}(\bar{X}/W).
\]
 %Suppose the assumption (\ref{assumption}) holds for $X_0$ and $\Ho^1(X,T_{X/W})$ is a free $W$-module with \[\Ho^1(X,T_{X/W})\otimes k= \Ho^1(X_0,T_{X_0}).\]
If the cup product\[\Ho^1(X,T_{X})\rightarrow
\bigoplus_{p+q=n} \mathrm{Hom}(\Ho^q(X,\Omega^p_{X/K}),\Ho^{q+1}(X,\Omega^{p-1}_{X/K}))\]is injective, then one can lift $g_0$ to an automorphism $g: \bar{X}\rightarrow \bar{X}$ of $\bar{X}$ over $W$. 
\end{theorem}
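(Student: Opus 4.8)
The plan is to reduce the statement to an obstruction-theoretic computation by lifting $g_0$ step by step along the truncations $W_n:=W/p^nW$, and at each stage to identify the obstruction class with an element of a cohomology group that is killed by the combination of the two hypotheses — the degeneration of the Hodge--de Rham spectral sequence and the injectivity of the cup product. First I would set up the infinitesimal situation: write $\bar X_n:=\bar X\times_W W_n$, so that $\bar X_1=X$ and $\bar X=\varprojlim \bar X_n$, and observe that to produce $g$ it suffices (by Grothendieck's existence / formal GAGA for the projective $\bar X$, and the fact that $W$ is complete) to produce a compatible system of automorphisms $g_n\colon \bar X_n\to \bar X_n$ lifting $g_0$. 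The obstruction to lifting $g_{n-1}$ to $g_n$, given that $W_n\to W_{n-1}$ is a square-zero extension with ideal $p^{n-1}W\cdot K\cong K$, lives in $\Ho^1(X,\Theta_X)$ (this is the standard deformation-theoretic description of the obstruction to deforming an automorphism across a square-zero thickening; cf. the equivariant deformation theory recalled in Section 3), and the freedom in the choice of $g_n$, once it exists, is a torsor under $\Ho^0(X,\Theta_X)$.

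Next I would bring in crystalline cohomology to control this obstruction. By hypothesis the Hodge--de Rham spectral sequence of $\bar X/W$ degenerates and the $E_1$-terms are locally free $W$-modules, so $\Ho^n_{\DR}(\bar X/W)$ is a free $W$-module equipped with the Hodge filtration $F^\bullet$, and the comparison isomorphism identifies it with $\cris^n(X/W)$; moreover the reduction mod $p$ recovers the Hodge filtration on $\Ho^n_{\DR}(X/K)$. The point of the hypothesis on $\cris^n(g_0)$ is that the automorphism $g_0$, which a priori acts only on the special fibre, already acts on $\cris^n(X/W)\cong \Ho^n_{\DR}(\bar X/W)$ preserving $F^\bullet$ — that is, it acts compatibly with the Hodge filtration on the $W$-lattice. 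I would then argue, following Deligne's and Ogus's technique (the same circle of ideas used for K3 surfaces in \cite[Chapter 15]{K3}), that the obstruction class in $\Ho^1(X,\Theta_X)$ to lifting $g_{n-1}$ maps, under the cup-product map of the hypothesis, to the obstruction to lifting the action of $g_0$ on de Rham cohomology as a filtered automorphism of the $W_n$-lattice $\Ho^n_{\DR}(\bar X_n/W_n)$; but that latter obstruction vanishes precisely because $g_0$ was assumed to preserve the Hodge filtration on the full $W$-lattice $\cris^n(X/W)$. Hence the image of the obstruction under the injective cup product map is zero, so the obstruction itself is zero, and the lift $g_n$ exists.

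Finally, having shown each lifting step is unobstructed, I would assemble the $g_n$ into a compatible system. This requires a small extra argument, since the $g_n$ are only unique up to the action of $\Ho^0(X,\Theta_X)$: one chooses the lifts inductively, and if at some stage the chosen $g_n$ is not compatible with the previously chosen $g_{n-1}$ in the required sense one corrects it by an element of $\Ho^0(X,\Theta_X)$ — here one uses that for the cyclic coverings in question (which are Fano, Calabi--Yau, or of general type) $\Ho^0(X,\Theta_X)$ is either zero or small and in any case the correction can be carried out, or more cleanly one notes that it is enough to lift the graph of $g_0$ as a subscheme of $X\times X$, reducing to the unobstructedness of the Hilbert scheme point, which again is governed by $\Ho^1$ of a normal bundle controlled by the same vanishing. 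The main obstacle is the middle step: making precise the comparison between the deformation-theoretic obstruction class in $\Ho^1(X,\Theta_X)$ and the filtered-automorphism obstruction on crystalline cohomology, i.e. checking that the relevant square commutes and that the cup product map of the hypothesis is exactly the map carrying one obstruction to the other. Everything else is formal once that compatibility — essentially a Kodaira--Spencer / Griffiths-transversality statement in the crystalline setting — is in hand, and it is precisely this compatibility that forces the injectivity hypothesis on the cup product into the argument.
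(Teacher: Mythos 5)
The paper does not actually prove this statement: it is quoted verbatim from \cite[Theorem 1.7]{Pan16}, so there is no in-paper argument to compare yours against. That said, your outline is the standard Deligne--Ogus lifting argument and is, in substance, the right (and presumably the cited) proof: lift along the square-zero extensions $W_n\rightarrow W_{n-1}$, observe that $g_{n-1}$ lifts to $\bar X_n$ if and only if the two liftings $\bar X_n$ and $(g_{n-1})_*\bar X_n$ of $\bar X_{n-1}$ coincide, measure their difference by a class in $\Ho^1(X,\Theta_X)$ via the torsor structure on liftings, and kill that class using the hypothesis on $\cris^n(g_0)$ together with the injectivity of the cup product.

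Two caveats. First, the step you flag as ``the main obstacle'' is the entire mathematical content of the theorem: one must prove that under the canonical crystalline identification $\Ho^n_{\DR}(\bar X_n/W_n)\cong\cris^n(X/W_n)\cong \Ho^n_{\DR}((g_{n-1})_*\bar X_n/W_n)$, the two Hodge filtrations differ exactly by the image of the torsor-difference class under the cup-product (period) map. This is precisely where the degeneration and local-freeness hypotheses enter: they guarantee that $\Ho^n_{\DR}$ and its filtration are free $W$-modules commuting with base change to each $W_n$, so that the assumption ``$g_0$ preserves $F^\bullet$ on the $W$-lattice'' reduces correctly modulo $p^n$ at every stage. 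As written you assert this compatibility rather than establish it, so the proposal is an accurate road map rather than a complete proof. Second, your closing paragraph addresses a non-issue: each $g_n$ is constructed as a lift of $g_{n-1}$, so the system is compatible by construction and no correction by $\Ho^0(X,\Theta_X)$ is needed; the appeal to special properties of cyclic coverings (Fano, Calabi--Yau, general type) is also out of place, since the theorem concerns an arbitrary smooth projective $\bar X/W$. The final algebraization via Grothendieck existence is correct.
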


\begin{lemma}\label{vanishing}
Let X be a smooth $k$-cyclic covering of $\PP^n_{K}$ branched along a smooth hypersurface $D$ and $n\geq 2$. Then $H^1(X,\OO_X)=H^0(X,\Omega_X^1)=0$. Moreover, if $n=2$ and the canonical bundle $\kappa_X$ is ample or trivial, then $H^0(X, T_X)=0$.
\end{lemma}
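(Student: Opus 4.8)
The plan is to prove the three vanishing statements by pushing forward to $\PP^n$ and using the decomposition of $f_*\Omega^j_X$ coming from Esnault--Viehweg (equation (\ref{4.1.2}), or its relative form in Lemma \ref{lemma5.4}), reducing everything to vanishing of cohomology of sheaves on projective space.

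\textbf{Vanishing of $H^1(X,\OO_X)$.} Since $f$ is finite, $H^1(X,\OO_X)=H^1(\PP^n,f_*\OO_X)=\bigoplus_{i=0}^{k-1}H^1(\PP^n,\LL^{-i})$ by Definition \ref{def2.1}. Each $\LL^{-i}$ is either $\OO_{\PP^n}$ (when $i=0$) or an anti-ample line bundle (when $i\geq 1$, since $\LL=\OO_{\PP^n}(m)$ with $m>0$), so $H^1$ of each summand vanishes for $n\geq 2$ by the standard computation of the cohomology of line bundles on $\PP^n$. This is the easy case and I would dispose of it in one line, essentially copying the computation already used in Proposition \ref{rk3.8} and Proposition \ref{prop4.6}.

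\textbf{Vanishing of $H^0(X,\Omega^1_X)$.} Again by finiteness of $f$ and the Esnault--Viehweg decomposition (\ref{4.1.2}),
\[
H^0(X,\Omega^1_X)=H^0(\PP^n,\Omega^1_{\PP^n})\oplus\bigoplus_{i=1}^{k-1}H^0(\PP^n,\Omega^1_{\PP^n}(\log D)\otimes\LL^{-i}).
\]
The first summand vanishes because $H^0(\PP^n,\Omega^1_{\PP^n})=0$ for $n\geq 1$ (from the Euler sequence). For the remaining summands, I would invoke exactly the argument already run in Theorem \ref{thm5.9} (the $(2,2,3)$ case) and in Proposition \ref{prop5.5}: the residue exact sequence
\[
0\to\Omega^1_{\PP^n}\to\Omega^1_{\PP^n}(\log D)\to\iota_*\OO_D\to 0
\]
twisted by $\LL^{-i}$ reduces the claim to $H^0(\PP^n,\Omega^1_{\PP^n}\otimes\LL^{-i})=0$ and $H^0(D,\OO_D\otimes\LL^{-i}|_D)=0$. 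The first holds since $\Omega^1_{\PP^n}$ is a subsheaf of $\OO_{\PP^n}(-1)^{\oplus(n+1)}$ and $\LL^{-i}$ is negative; the second holds since $\LL^{-i}|_D=\OO_D(-mi)$ is an anti-ample line bundle on the positive-dimensional variety $D$ (here $n\geq 2$ so $\dim D\geq 1$), hence has no global sections. I would phrase this as a direct citation of (\ref{eqeqzero}) in Proposition \ref{prop5.5} specialized to $S=\Spec K$, degree zero cohomology, to avoid redoing the resolution argument.

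\textbf{Vanishing of $H^0(X,T_X)$ when $n=2$ and $\kappa_X$ is ample or trivial.} By Serre duality on the surface $X$, $H^0(X,T_X)=H^0(X,\Omega^1_X\otimes\kappa_X^{-1})^{\vee\vee}$ dualizes to $H^2(X,\Omega^1_X\otimes\kappa_X)$; more usefully $T_X=\Omega^1_X\otimes\kappa_X^{-1}$ for a surface, so $H^0(X,T_X)=H^0(X,\Omega^1_X\otimes\kappa_X^{-1})$. If $\kappa_X$ is trivial this is just $H^0(X,\Omega^1_X)=0$ by the previous step. If $\kappa_X$ is ample, then $\kappa_X^{-1}$ is anti-ample, so $\Omega^1_X\otimes\kappa_X^{-1}$ has "even fewer" sections: concretely, pushing forward, $H^0(X,\Omega^1_X\otimes\kappa_X^{-1})=H^0(\PP^2,\kappa_X^{-1}\otimes f_*\Omega^1_X)$, and since $\kappa_X=f^*\OO_{\PP^2}(d)$ for some $d>0$ (from (\ref{eqcanonical}), exactly as in Lemma \ref{lemma4.8}), this becomes $H^0(\PP^2,\Omega^1_{\PP^2}(-d))\oplus\bigoplus_{i=1}^{k-1}H^0(\PP^2,\Omega^1_{\PP^2}(\log D)\otimes\OO_{\PP^2}(-d)\otimes\LL^{-i})$, and each summand vanishes by the same residue-sequence argument since now every twist is strictly more negative than before. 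The main obstacle — really the only point requiring care — is bookkeeping the case $\kappa_X$ trivial versus ample uniformly: in the trivial case one cannot gain negativity from $\kappa_X^{-1}$, so one genuinely needs the $H^0(X,\Omega^1_X)=0$ input, whereas in the ample case the argument is strictly easier; I would handle the trivial case by reduction to the second step and the ample case by the twisted push-forward, noting in both that $n=2$ guarantees $\dim D=1>0$ so that $\LL^{-i}|_D$ has no sections.
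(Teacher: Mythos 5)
Your proof is correct. For the vanishing of $H^1(X,\OO_X)$ and $H^0(X,\Omega^1_X)$ you follow exactly the paper's route: the Esnault--Viehweg decomposition of $f_*\Omega^j_X$ together with the logarithmic vanishing (\ref{eqeqzero}) established in Proposition \ref{prop5.5}. Where you genuinely diverge is the last assertion. The paper first rewrites $H^0(X,T_X)$ via Serre duality as $H^2(X,\Omega^1_{X/K}\otimes\kappa_X)$, then in the ample case invokes the Akizuki--Nakano-type vanishing of \cite[Corollary 11.3]{EV92}, which requires observing that $X$ lifts to the Witt ring $W(K)$ (the Deligne--Illusie hypothesis), and in the trivial case falls back on the known fact that K3 surfaces have no vector fields. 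You instead use the rank-two identity $T_X\cong\Omega^1_X\otimes\kappa_X^{-1}$ and push forward to $\PP^2$, so that both cases are handled by the same machinery already set up for the first two vanishings: the trivial case reduces to $H^0(X,\Omega^1_X)=0$, and the ample case only gains negativity in the twists, so (\ref{eqeqzero}) with $i=0$, $j=1$ applies verbatim. Your route is more elementary and self-contained --- it avoids the characteristic-$p$ vanishing theorem, the lifting argument, and the appeal to the K3 classification --- at the cost of being special to surfaces (which is all the lemma needs). Both arguments are valid; yours arguably fits the paper's toolkit more tightly, since Lemma \ref{lemma5.4} and (\ref{eqeqzero}) are already proved over an arbitrary base.
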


\begin{proof}
It follows from  Lemma \ref{lemma5.4} and (\ref{eqeqzero}) that $H^1(X, \OO_X)$ and $H^0(X, \Omega^1_X)$ are zero. Furthermore, we have
\[
H^0(X, T_X)=H^2(X,\Omega_{X/K}^1\otimes \kappa_X)
\]
Note that $X$ can be lift to the Witt ring $W(K)$ of $K$ as a smooth cyclic covering of $\PP^n_W$. By Theorem \ref{thm5.4}, the relative Hodge-de Rham spectral sequences degenerate in $E_1$. So it suffices to prove the vanishing of $H^2(X,\Omega_{X/K}^1\otimes \kappa_X)$ for $X$ over the generic fiber $X_{\CC}$. Notice that $\kappa_X$ is ample, the vanishing is a consequence of the Kodaira-Akizuki-Nakano vanishing theorem. If $\kappa_X$ is trivial and $n=2$, then $X$ is a $K3$ surface. It is well known that $H^0(X, T_X)=0$
\end{proof}

\begin{lemma} \label{torsionfree}
Let X be a smooth $k$-cyclic covering over $\PP^n_{K}$ branched along $D(\subseteq \PP^n_K)$ with $n\geq 2$. Then the N\'eron-Severi group $\NS(X)$ is torsion free. % If the degree of $D$ in $\PP^n$ is at least $3$, then $\Aut(X)_{tr}$ is finite.

\end{lemma}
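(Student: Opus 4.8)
The plan is to show that $\NS(X)$ is torsion free by relating torsion in $\NS(X)$ to étale cohomology and then to the corresponding vanishing statements for $\PP^n_K$ and for $D$, which we already control. Recall that $\NS(X)_{\mathrm{tors}}$ is a subquotient of $\Pic(X)_{\mathrm{tors}}$, and by the Kummer sequence the prime-to-$p$ torsion of $\Pic(X)$ injects into $\et^1(X,\mu_m)$ for $m$ prime to $\Char(K)$; more concretely, $\ell$-torsion classes in $\NS(X)$ are detected by $\et^1(X,\Z_\ell)$ together with $\et^2(X,\Z_\ell)_{\mathrm{tors}}$. So the first step is to reduce the statement to the vanishing of $\et^1(X,\Z/\ell\Z)$ for all primes $\ell\ne\Char(K)$, and the analogous control of $p$-torsion via flat cohomology or, more simply, via the fact that $\Pic(X)$ is finitely generated (which follows once $H^1(X,\OO_X)=0$, Lemma \ref{vanishing}, and $\Pic^0_{X}$ is trivial) so only prime-to-$p$ torsion can a priori occur if $X$ lifts to characteristic zero.

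Next I would exploit the covering structure. Since $f\colon X\to\PP^n_K$ is finite of degree $k$ prime to $\Char(K)$, the trace map splits $f^*$ on cohomology with $\Z[1/k]$-coefficients, and more usefully the ramified covering is built inside the projective bundle $\hat L=\PP(\EE)$ as in diagram (\ref{linebundleL}). The second step is a Lefschetz-type argument: for $\dim X=n\ge 3$ one uses that $X$ is an ample divisor in $\hat L$ (Lemma \ref{lemma2.0.1}(iii)), so the Lefschetz hyperplane theorem in étale cohomology gives $\et^1(X,\Z/\ell\Z)\cong \et^1(\hat L,\Z/\ell\Z)$ and an injection on $\et^2$; since $\hat L$ is a projective bundle over $\PP^n_K$, its étale cohomology is that of $\PP^n_K$ with a Tate twist in each bundle degree, which is torsion free and has no $H^1$. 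This forces $\et^1(X,\Z/\ell\Z)=0$ and pins down $\et^2(X,\Z_\ell)$ as torsion free, hence $\NS(X)$ torsion free. For $n=2$ the divisor $X\subset\hat L$ is a surface and the Lefschetz argument degenerates, so here I would instead argue directly: $H^1(X,\OO_X)=0$ (Lemma \ref{vanishing}) kills the $p$-part and the identity component of $\Pic$, while for $\ell\ne p$ one compares with $\PP^2_K$ and the branch curve $D$ using the decomposition (\ref{4.1.2}) of $f_*$ applied to the constant sheaf — equivalently, $\et^1(X,\Z/\ell\Z)$ sits inside $\et^1(\PP^2_K,\Z/\ell\Z)\oplus(\text{terms from }D)$, and simple connectivity of $\PP^2_K$ together with $\et^1(D,\Z/\ell\Z)$ contributing only through eigencomponents that vanish gives the claim; alternatively one invokes that a smooth cyclic cover of a simply connected variety branched along a smooth ample divisor is itself simply connected.

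The cleanest uniform route, which I would actually write up, is to prove $\pi_1^{\mathrm{\acute et}}(X)=0$: a smooth $k$-fold cyclic cover of $\PP^n_K$ ($n\ge2$) branched along a smooth hypersurface is simply connected, because by a Grothendieck–Lefschetz / Nori-type theorem the fundamental group of $X$ surjects onto that of the complement of the ramification with the covering relation imposed, and $\pi_1(\PP^n_K\setminus D)$ is cyclic generated by a loop around $D$ whose $k$-th power is trivialized in $X$; for $n\ge2$ this loop class is already $k$-torsion in $H_1$, and one checks the residual group is trivial. From $\pi_1^{\mathrm{\acute et}}(X)=0$ we get $\et^1(X,\Z/m\Z)=0$ for all $m$ prime to $p$, hence $\Pic(X)_{\mathrm{tors}}$ has no prime-to-$p$ part; combined with $H^1(X,\OO_X)=0$ ruling out $p$-torsion (via the exponential/Artin–Schreier–Witt analysis, using that $X$ lifts to $W(K)$), we conclude $\Pic(X)$ is torsion free, a fortiori $\NS(X)$ is torsion free. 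The main obstacle I anticipate is the $n=2$ surface case and the $p$-torsion: the Lefschetz machinery that makes higher dimensions routine is unavailable for surfaces, and controlling $p$-torsion in $\Pic$ in characteristic $p$ requires either the lifting to $W(K)$ together with flatness of $\Pic$ or a direct flat-cohomology computation, so that is where I would spend the real effort.
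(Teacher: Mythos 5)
Your overall strategy (kill prime-to-$p$ torsion by a simple-connectivity/étale-cohomology argument, kill $p$-torsion using $H^1(X,\OO_X)=0$ and the lift to $W(K)$) is the same shape as the paper's, but two of your concrete steps do not work as written. First, the Lefschetz argument for $n\geq 3$: $X$ is \emph{not} an ample divisor in $\hat{L}$. By Lemma \ref{lemma2.0.1}(iii), $\OO_{\hat{L}}(X)\cong\pi^*\LL^k\otimes\OO_{\PP(\EE)}(k)$, and restricting to the section at infinity $C_\infty$ (the one given by the quotient $\EE\to\LL^{-1}$, on which $\OO_{\PP(\EE)}(1)$ restricts to $\LL^{-1}$) gives the trivial bundle; indeed $X$ lives in $\mathbb{V}(L)=\hat{L}\setminus C_\infty$ and so is disjoint from the positive-dimensional subvariety $C_\infty$. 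Hence the Lefschetz hyperplane theorem cannot be invoked for $X\subset\hat{L}$, and the complement $\hat{L}\setminus X$ is not affine either, so Artin's affine variant is also unavailable. What actually works — and is what the paper does — is the alternative you mention only in passing: lift $X$ to $W(K)$, embed $W(K)\hookrightarrow\CC$, observe that the complex fiber is a smooth hypersurface in a weighted projective space and hence simply connected, and transport torsion-freeness of $\et^2(X,\Z_\ell)$ back by smooth proper base change; combined with the injection $\NS(X)\otimes\Z_\ell\hookrightarrow\et^2(X,\Z_\ell(1))$ from the Kummer sequence this kills all $\ell$-torsion for $\ell\neq p$. Your preferred ``uniform'' route through $\pi_1^{\text{\'et}}(\PP^n_K\setminus D)$ in characteristic $p$ would require controlling wild ramification and is considerably more delicate than what is needed.

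Second, the $p$-torsion. Saying that $H^1(X,\OO_X)=0$ ``rules out $p$-torsion via the exponential/Artin--Schreier--Witt analysis'' is not a proof: vanishing of $H^1(X,\OO_X)$ kills $\Pic^0$, but $p$-torsion in $\NS(X)$ in characteristic $p$ is a genuinely separate phenomenon. The mechanism the paper uses, and which your proposal is missing, is the universal coefficient sequence for crystalline cohomology: since $H^1(X,\OO_X)=H^0(X,\Omega^1_X)=0$ one gets $H^1_{\DR}(X/K)=0$, whence $\mathrm{Tor}_1^W(\cris^2(X/W),K)=0$, i.e.\ $\cris^2(X/W)$ is $p$-torsion free; one then concludes via the Illusie--Deligne injection $\NS(X)\otimes\Z_p\hookrightarrow\cris^2(X/W)$. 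Note that this needs $H^0(X,\Omega^1_X)=0$ as well, not just $H^1(X,\OO_X)=0$. You correctly identified this as the hard point, but the proposal does not supply the argument, so as it stands the $p$-part of the lemma is unproved.
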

\begin{proof}

In fact, by the universal coefficient theorem of crystalline cohomology, we have a short exact sequence
\begin{equation}\label {uc}
0\rightarrow \cris^1({X}/W)\otimes_W K\rightarrow \Ho_{\DR}^1({X}/K)\rightarrow \mathrm{Tor}_1^W(\cris^{2}({X}/W),K) \rightarrow 0
\end{equation}
where $W$ is the Witt ring of $K$. It follows from Lemma \ref{lemma5.4} and (\ref{eqeqzero}) that $H^1(X, \OO_X)$ and $H^0(X, \Omega^1_X)$ are zero. By the Hodge-de Rham spectral equence,  $H^1_{DR}(X/K)=0$. It implies $\mathrm{Tor}_1^W(\cris^{2}({X}/W),K)=0$, in other words, the crystalline cohomology $\cris^{2}({X}/W)$ is p-torsion free. By a theorem of Illusie and Deligne, see \cite[Remark 3.5]{delsur} and \cite{Ill}, we have an injection \[\NS(X)\otimes \mathbb{Z}_p \hookrightarrow \cris^2(X/W).\] We conclude that $\NS(X)$ is $p$-torsion-free.

On the other hand, for any prime $l\neq p$, we have the short exact sequence \cite[Chapter V, Remark 3.29 (d)]{Milne}
\begin{equation} \label{NSinj}
0\rightarrow \NS(X)\otimes \Z_l\rightarrow \et^2(X,\Z_l(1))\rightarrow T_l(\mathrm{Br}(X))\rightarrow 0.
\end{equation} We claim that $\et^2(X,\Z_l(1))$ is torsion free. Therefore, the group $\NS(X)$ is torsion-free.

In fact, we denote the natural lifting of $X$ over $W$ by $\overline{X}$. Choose an embedding $W\rightarrow \mathbb{C}$. We have the variety $\X_{\mathbb{C}}$ which is a $k$-cyclic covering over $\PP^n_{\mathbb{C}}$. Since a cyclic covering over a projective space is a hypersurface in a weighted projective space, it is simply connected by \cite[Theorem 3.2.4 (ii)']{WPS}. 

By the universal coefficient theorem, we have
\begin{align*}
\Ho^2_{sing}(\X_{\mathbb{C}},\Z_l)&=\Hom(\Ho_2(\X_{\mathbb{C}},\Z),\Z_l)=\lim_{\overleftarrow n } \Hom(\Ho_2(\X_{\mathbb{C}},\Z), \Z/l^n\Z)\\
&=\lim_{\overleftarrow{n}} \et^2 (\X_{\mathbb{C}},\Z/l^n\Z)=\et^2(\X_{\mathbb{C}},\Z_l)=\et^2(X,\Z_l).
\end{align*}
Since $\Ho^2_{sing}(\X_{\mathbb{C}},\Z_l)$ is torsion free, the group $\et^2(X,\Z_l)$ is torsion-free. The claim holds for $X$ over $K$.

\end{proof}

Denote by $\Aut(X)_{tr}$ the kernel of $\Aut(X)\rightarrow \et^n(X,\Ql)$ ($l\neq char(K)$).

\begin{lemma} \label{autofinite}
Let X be a smooth $k$-cyclic covering over $\PP^n_{K}$ branched along $D$ with $n\geq 2$. If $X$ is not a quadric hypersurface, i.e. $\deg D\geq 3$, then $\Aut(X)_{tr}$ is finite.
%\begin{enumerate}
%\item the degree of $D$ in $\PP^n$ is at least $3$ and $n\geq 3$;
%\item $n=2$ and the canonical bundle $\kappa_X$ is ample or trivial,
%\end{enumerate}

\end{lemma}
\begin{proof}
By Lemma \ref{vanishing} and Lemma \ref{torsionfree}, we conclude that $\NS(X)=\Pic(X)$ is torsion free. Therefore, the following map
\begin{equation}\label{cherninjective}
c_1:\Pic(X)\rightarrow \Pic(X)\otimes \Z_l \rightarrow  \et^2(X,\Z_l(1))
\end{equation}
is injective by (\ref{NSinj}) where $l$ is a prime different from $\mathrm{char} (K)$.
\begin{itemize}
\item For $n\geq 3$. We claim $\Pic(X)=\Z$. In fact, we can lift $X$ to a cyclic covering $\X$ over complex numbers $\mathbb{C}$ with $\et^2(X,\Ql)=\et^2(\X,\Ql)$. By Proposition \ref{prop4.6}, we have $\et^2(X,\Ql)=\Ql$. The claim follows from Lemma \ref{torsionfree}. (If the degree of $D$ is $3$, then $X$ is a cubic hypersurface of dimension at least $3$ and the statement still holds by the Grothendieck-Lefschetz theorem). The claim implies that every automorphism preserves the ample line bundle $f^*\OO_{\PP^n}(1)$. Note that $\Aut_L(D)$ is finite if $\deg(D)\geq 3$ (\cite[Theorem 1.3]{Poo05}). It follows from Lemma \ref{lemma4.2}, Lemma \ref{lemma4.3}, and Proposition \ref{prop4.3} that $\Aut(X)$ is finite.

\item For $n=2$. We choose a very ample line bundle $L$ on $X$ such that the complete linear system $|L|:X\rightarrow \PP^N$ induces an embedding. It follows from the injectivity of the map $c_1$ (\ref{cherninjective}) and the torsion-freeness of $\Pic(X)$ that every automorphism $f\in \Aut_{tr}(X)$ fixes the line bundle $L$, i.e., $f^*L=L$. Therefore, we have a linear automorphism $g$ inducing the following diagram
\[
\xymatrix{ 
X\ar[r]^{|L|} \ar[d]_f & \PP^N \ar[d]^g  \\
X\ar[r]^{|L|}  & \PP^N .
}
\]
%\item For $n=2$. It follows from the injectivity of the map $c_1$ (\ref{cherninjective}) and the torsion-freeness of $\Pic(X)$ that every automorphism $\sigma\in \Aut_{tr}(X)$ fixes any line bundle in $\Pic(X)$. In particular, let $f: X\rightarrow \PP^2$ be the cyclic covering, we have $\sigma^*f^*\OO(1)=f^*\OO(1)$. Therefore, it exists a linear automorphism $g$ inducing the following diagram
%\[
%\xymatrix{ 
%X\ar[r]^{\sigma} \ar[d]_f & X\ar[d]_f \\
%\PP^2 \ar[r]^g & \PP^2.
%}
%\]
%Therefore, it is similar to Lemma \ref{lemma4.2} that we obtain a short exact sequence
%\[
%1\rightarrow K\rightarrow \Aut_{tr}(X) \rightarrow Q\rightarrow 1.
%\]
%where $K$(resp. $Q$) is some subgroup of the group of covering transformations $\Cov(X/\PP^n)$(resp. the group of linear automorphisms $\Aut_L(D)$). Again by Poonen's result \cite[Theorem 1.1]{Poo05}, $\Aut_L(D)$ is finite since $\deg D\geq 3$. Hence the group $\Aut_{tr}(X)$ is finite.
Let $G=\{h\in \mathrm{PGL_{N+1}}|h(X)=X\}$. We have $\Aut_{tr}(X)\subseteq G$. We claim $G$ is an subalgebraic group of $\mathrm{PGL_{N+1}}$. In fact, consider the Hilbert scheme $\Hilb_{p_X(t)}(\PP^N)$ parametrizes $X$. There is a natural action of $\mathrm{PGL_{N+1}}$ on $\Hilb_{p_X(t)}(\PP^N)$. The stabilizer of this action of the point $[X]$ parametrizing $X$ is $G$. Therefore, $G$ is algebraic. On the other hand, the infinitesimal automorphism of $X$ is trivial, i.e., $H^0(X,T_X)=0$ (cf. Lemma \ref{vanishing}). It follows that $G$ is a subgroup scheme of $\mathrm{PGL_{N+1}}$ with $\dim(G)=0$, hence, it is finite. We conclude that $\Aut_{tr}(X)$ is finite.
\end{itemize}

\end{proof}

\begin{theorem} \label{autopcfaith}
Let $K$ be an algebraically closed field of positive characteristic, and let X be a smooth cyclic covering of $\PP^n_{K}$ with $n\geq 2$. If $X$ is not a quadric hypersurface, then the action of the automorphism group $\Aut(X)$ on $\et^n(X,\Ql)$ ($l\neq char(K)$) is faithful, i.e., the natural map \[\Aut(X)\rightarrow \Aut(\et^n(X,\Ql))\] is injective.

\end{theorem}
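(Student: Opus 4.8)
The plan is to mimic the strategy used in the proof of Theorem \ref{autofaith} in characteristic zero, using the lifting criterion (Theorem \ref{corolifting}) to transfer the problem to characteristic zero. First I would dispose of the surface case $n=2$ separately, and then treat $n\geq 3$ using deformation-theoretic and lifting arguments.

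For $n\geq 3$: let $g_0\in\Aut(X)$ act trivially on $\et^n(X,\Ql)$; I want to show $g_0=\Id_X$. Using Theorem \ref{thm5.4} (the Hodge-to-de Rham spectral sequence degenerates and the terms are locally free) applied to the natural lift $\overline{X}$ over $W=W(K)$ of the cyclic covering $X$, together with Theorem \ref{thm5.9} (the infinitesimal Torelli theorem holds in positive characteristic, away from the two excluded low-dimensional cases which do not occur here since $n\geq 3$), I can invoke Theorem \ref{corolifting}: since $g_0$ acts trivially on $\et^n(X,\Ql)$ it certainly preserves the Hodge filtration on $\cris^n(X/W)\cong\Ho^n_{\DR}(\overline{X}/W)$, so $g_0$ lifts to an automorphism $g$ of $\overline{X}/W$. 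Then I base-change to the generic fibre $\overline{X}_\eta$ over the fraction field of $W$ and further to $\CC$ via an embedding; by Theorem \ref{thm3.8} the generic fibre is again a smooth cyclic covering of $\PP^n$, and the lifted automorphism $g_\CC$ acts trivially on $\Ho^n(\overline{X}_\CC,\CC)$ because crystalline/de Rham/Betti comparisons are compatible with the action and $g_0$ acted trivially in étale cohomology. By Theorem \ref{autofaith} applied to $\overline{X}_\CC$ (which is not a quadric, since $X$ is not), $g_\CC=\Id$. Specializing back, $g=\Id_{\overline{X}}$ and hence $g_0=\Id_X$.

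For $n=2$: the canonical bundle $\kappa_X=f^*(\kappa_{\PP^2}\otimes\LL^{k-1})$ is ample, trivial, or anti-ample. If $\kappa_X$ is ample or trivial, then by Lemma \ref{vanishing} $H^0(X,T_X)=0$ and by Lemma \ref{autofinite} $\Aut(X)_{tr}$ is finite; one then runs the same lift-to-$W$, lift-to-$\CC$, apply-Theorem \ref{autofaith} argument as above (in the ample case using Theorem \ref{thm5.9} for the infinitesimal Torelli input and Theorem \ref{thm5.4} for degeneration; in the $K3$ case the faithfulness over $\CC$ is classical and deformation theory is available). If $\kappa_X$ is anti-ample, then $X$ is a del Pezzo surface of degree $2$ or $3$ (the quadric case being excluded), i.e.\ a blow-up $\mathrm{Bl}:X\to\PP^2$ of $7$ or $6$ points in general position; an automorphism acting trivially on $\et^2(X,\Ql)$ fixes the classes of all exceptional curves, hence descends to $\rho\in\Aut(\PP^2)$ fixing more than $4$ points in general position, forcing $\rho=\Id$ and $g_0=\Id_X$, exactly as in the proof of Proposition \ref{propfaithsurface}.

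The main obstacle I expect is ensuring that the automorphism lifted via Theorem \ref{corolifting}, after base change to $\CC$, still acts trivially on the full cohomology (not merely preserves the Hodge filtration) so that Theorem \ref{autofaith} can be applied — this requires carefully tracking the compatibility of the $g_0$-action across the crystalline, de Rham and Betti realizations of $\overline{X}$, and using that $\NS(X)$ is torsion free (Lemma \ref{torsionfree}) so that triviality on $\et^n$ in one characteristic genuinely propagates. A secondary subtlety is that the generic fibre over $W$ might a priori be a hypersurface or a $K3$ surface rather than a ``genuine'' cyclic covering, but Theorem \ref{thm3.8} and the excluded cases in Theorem \ref{thm5.9}, \ref{autofaith} were arranged precisely to handle this, so this should only require bookkeeping rather than new ideas.
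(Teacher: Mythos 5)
Your overall architecture matches the paper's: lift $g_0$ to $W(K)$ via Theorem \ref{corolifting} using Theorems \ref{thm5.4} and \ref{thm5.9}, pass to a complex fibre and invoke Theorem \ref{autofaith}, and treat the anti-ample surface cases by the blow-up argument of Proposition \ref{propfaithsurface}. But the step you flag as ``the main obstacle'' is in fact a genuine gap that your proposal does not close, and the tool you suggest for it (torsion-freeness of $\NS(X)$) is not the right one. The issue is the bridge from ``$g_0$ acts trivially on $\et^n(X,\Ql)$'' to ``$\cris^n(g_0)=\Id$'': these are different cohomology theories, and a priori you know nothing about the crystalline action, so your assertion that $g_0$ ``certainly preserves the Hodge filtration'' on $\cris^n(X/W)\cong \Ho^n_{\DR}(\overline{X}/W)$ is unjustified --- and without it the hypothesis of Theorem \ref{corolifting} is unverified, and even after lifting you cannot conclude that $g_{\CC}$ acts trivially on Betti cohomology.

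The paper's resolution has two ingredients you omit. First, one must know that $g_0$ has \emph{finite order}; for $n\geq 3$ you never establish this (the paper does so via Lemma \ref{autofinite}, which uses the torsion-freeness of $\NS(X)$ from Lemma \ref{torsionfree} together with $\Pic(X)=\Z$ and Poonen's finiteness of $\Aut_L(D)$ --- this is where torsion-freeness actually enters, not in ``propagating triviality''). Second, one invokes the Katz--Messing identity
\[\det(\Id-g_0^*t,\cris^n(X/W)_{W[\frac{1}{p}]})=\det(\Id-g_0^*t,\et^n(X,\Ql)),\]
so that, $g_0$ being of finite order, both operators are diagonalizable with the same characteristic polynomial $(1-t)^{b_n}$, forcing $\cris^n(g_0)_{W[\frac{1}{p}]}=\Id$ and hence $\cris^n(g_0)=\Id$ since $\cris^n(X/W)$ is a free $W$-module by Theorem \ref{thm5.4}. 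This simultaneously verifies the Hodge-filtration hypothesis of Theorem \ref{corolifting} and guarantees that the lifted automorphism acts trivially on $\Ho^n_{\DR}(\overline{X}/W)$, hence on $\Ho^n(\overline{X}_{\CC},\CC)$, so that Theorem \ref{autofaith} applies. Without supplying these two ingredients your argument does not go through.
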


\begin{proof}
Suppose that $g_0$ is an automorphism of $X$ and in $\Aut(X)_{tr}$.
It follows from  \ref{autofinite} that $g_0$ is of finite order. Let $W$ be the Witt ring of $K$. Note that \[\det(\Id-g_0^*t,\cris^n(X/W)_{W[\frac{1}{p}]})=\det(\Id-g_0^*t,\et^n(X,\Ql)),\]
see \cite[Theorem 2]{KM} and \cite[3.7.3 and 3.10]{Ill1}. The finiteness of the order of $g_0$ implies that $\et^n(g_0,{\mathbb{Q}_l})=\Id$ if and only if $\cris^n(g_0)_{W[\frac{1}{p}]}=\Id$ since both $\et^n(g_0,{\mathbb{Q}_l})$ and $\cris^n(g_0)_{W[\frac{1}{p}]}$ are diagonalizable. Note that $X$ can be lift to $W$ as a smooth cyclic covering of $\PP^n_W$. Let $\X$ be a such lifting of the $X$ over $W$. It follows from Theorem \ref{thm5.4} that $$\cris^n(X/W)=\Ho^n_{\DR}(\overline{X}/W)$$ is a finite free $W$-module. Therefore, we have $\cris^n(g_0)=\Id$. 

By Theorem \ref{thm5.4} and Theorem \ref{thm5.9}, the assumptions of Proposition \ref{corolifting} hold for $g_0$ except when $X$ is
\begin{itemize}
\item a 3-fold covering of $\PP^2_K$ branched along a cubic curve,
\item a 2-fold covering of $\PP^2_K$ branched along a quartic curve.
\end{itemize} 
For all other cases, one can lift the automorphism $g_0$ to an automorphism $g$ of $\X/W$. Then the proof in Theorem \ref{autofaith} can be applied to obtain the conclusions.

Recall that the canonical bundle is
\[
\kappa_X=f^*(\kappa_{\PP^2}\otimes \LL^{k-1}).
\]
The canonical bundles of the above two excluded cases are anti-ample, i.e. where $X$ is a Fano surface. The possible types of $X$ are listed in the proof of Proposition \ref{propfaithsurface}. Since every Fano surface is a blowup of the projective plane, we can use the same argument in Proposition \ref{propfaithsurface} to prove the theorem for these two cases .

\end{proof}

\end{document}